\documentclass{siamart220329}

\usepackage{siampaper}

\usepackage{multirow}
\newcommand{\emphdef}[1]{\emph{#1}}
\usepackage{enumitem}

\newcommand{\cahiernumber}{42}  

\usepackage[hyperpageref]{backref}
\renewcommand*{\backref}[1]{}
\renewcommand*{\backrefalt}[4]{%
\ifcase #1 %
No citations.%
\or
\marginpar{\tiny cited on p. #2}%
\else
\marginpar{\tiny cited on pp. #2}%
\fi
}
\crefname{subsection}{section}{sections}
\Crefname{subsection}{Section}{Sections}

\def\theTitle{%
  Envelopt:~Constrained~Convex~Composite~Optimization%
}

\def\theKeywords{%
  Convex composite constrained optimization,
  nonlinear programming,
  augmented Lagrangian method,
  proximal algorithms,
  Moreau envelope%
}

\def\authorADM{Alberto De~Marchi}
\def\emailADM{alberto.demarchi@unibw.de}
\def\orcidADM{0000-0002-3545-6898}
\def\affiliationADM{%
	Institute of Applied Mathematics and Scientific Computing,
	Department of Aerospace Engineering,
	University of the Bundeswehr Munich,
	Germany%
}

\def\authorDPO{Dominique Orban}
\def\emailDPO{dominique.orban@gerad.ca}
\def\orcidDPO{0000-0002-8017-7687}
\def\affiliationDPO{%
	GERAD and Department of Mathematics and Industrial Engineering,
	Polytechnique Montr{\'e}al,
	Canada%
}
\newcommand\nsercACK[1]{Research partially supported by NSERC Discovery Grant RGPIN–#1}

\hypersetup{
  pdftitle={\theTitle},
  pdfauthor={\authorADM{} and \authorDPO{}},
  pdfsubject={Convex composite constrained optimization},
  pdfkeywords={\theKeywords},
}

\makeatletter
\newcommand*{\coloneqq}{\mathrel{\vcenter{\baselineskip0.5ex \lineskiplimit0pt
                     \hbox{\scriptsize.}\hbox{\scriptsize.}}}%
                     =}
\makeatother
\newcommand{\orcidLink}[1]{\href{https://orcid.org/#1}{#1}}
\newcommand{\amsmscLink}[1]{\href{http://www.ams.org/mathscinet/msc/msc2020.html?t=#1}{#1}}
\DeclareMathOperator{\dom}{dom}
\DeclareMathOperator{\dist}{dist}
\newcommand{\lagr}{\mathcal{L}}
\newcommand{\elim}{\mathop{\textup{e-lim}}}

\newcommand{\identity}{\mathbb{I}}
\newcommand{\ceil}[1]{\left\lceil #1 \right\rceil}

\title{\theTitle}

\author{
  \MakeUppercase\authorADM\thanks{%
    \affiliationADM.
    E-mail: \mailto{\emailADM}.
    ORCID\@: \orcidLink{\orcidADM}.
  }
  \and
  \authorDPO\thanks{%
    \affiliationDPO.
    E-mail: \mailto{\emailDPO}.
    ORCID\@: \orcidLink{\orcidDPO}.
    \funding{\nsercACK{2020–06535}}.
  }
}
\date{\today}
\newcommand{\TheAbstract}{%
	We introduce Envelopt, a globally convergent iterative framework for a broad class of structured optimization problems where a smooth objective is augmented by a nonsmooth convex regularizer composed with a smooth mapping, and the variables are subject to general smooth constraints.
	All smooth functions may be nonconvex.
	The method is akin to an augmented-Lagrangian method in which partial minimization with respect to a lifting variable results in smooth subproblems involving the Moreau envelope of the nonsmooth regularizer, and the original constraints are retained explicitly.
	Only the proximal operator of the regularizer is required.
	Subproblems may be solved with off-the-shelf smooth optimization solvers.
	We state global convergence properties, establish that feasible limit points are asymptotically stationary, and develop an infeasibility detection mechanism.
	We derive worst-case iteration complexity bounds when the penalty parameter is and is not bounded away from zero.
	The framework subsumes the classical augmented Lagrangian method and accommodates important extensions, including stabilized formulations for degenerate problems, exact penalty methods, and conic constraints.
	We provide a Julia implementation, \texttt{Envelopt.jl}, as part of the JuliaSmoothOptimizers ecosystem.
	Numerical experiments with low-rank matrix completion, semidefinite programming, complementarity-constrained optimization, and nonconvex regularizers demonstrate the effectiveness and versatility of Envelopt.%
}
\newcommand{\TheMSCClasses}{%
	\amsmscLink{49M37}, 
	\amsmscLink{49J53},  
	\amsmscLink{65K05},  
	\amsmscLink{65K10},  
	\amsmscLink{90C06}
}

\begin{document}

\maketitle

\thispagestyle{firstpage}
\pagestyle{myheadings}

\begin{abstract}
	\TheAbstract
\end{abstract}

\begin{keywords}
	\theKeywords{}.
\end{keywords}

\begin{AMS}
	\TheMSCClasses
\end{AMS}

\section{Introduction}%
\label{sec:introduction}

We consider the problem
\begin{equation}%
	\label{eq:constrained-convex-composite}
	\minimize{x \in \R^n}\quad
	f(x) + h(F(x))
	\quad \st \quad
	c(x) \in C,
\end{equation}
where \(f: \R^n \to \R\), \(F: \R^n \to \R^p\), and \(c: \R^n \to \R^m\) are \(C^1\), \(h: \R^p \to \R \cup \{+\infty\}\) is proper, lower semicontinuous (lsc), and convex, and \(C \subseteq \R^m\) is nonempty, closed, and convex.
Even though \(f\), \(F\) and \(c\) may be nonconvex, we call~\eqref{eq:constrained-convex-composite} a \emph{constrained convex composite problem} because of the crucial role played by the convexity of $h$ \citep{burke1985descent}.

Our approach consists in solving~\eqref{eq:constrained-convex-composite} by solving a sequence of smooth problems
\begin{equation}%
	\label{eq:partially-eliminated-subproblem}
	\minimize{x \in \R^n}\quad
	f(x) + h_\mu(F(x) + \mu \hat{y})
	\quad \st \quad
	c(x) \in C,
\end{equation}
where \(h_\mu: \R^n \to \R\) is the \(C^1\) Moreau envelope of \(h\), \(\mu > 0\), \(\hat{y} \in \R^p\), and \(c\) is as in~\eqref{eq:constrained-convex-composite}.
Our main working assumptions are that an efficient solver for such smooth problems is available, and that the proximal operator of \(h\) is available.
Some or all components of \(F(x)\) could originate from constraints that were moved to the objective via an indicator.
In our approach, such constraints will, in general, not be satisfied along the iterations.
In that sense, constraints in \(c\) may be viewed as \emph{hard} or \emph{unrelaxable}.
If any constraint is \emph{soft}, or \emph{relaxable}, the user has the option of encoding it into \(h\) by way of an indicator provided that the proximal operator remains available.
Keeping a constraint explicit means our solver can, at the user's option, enforce it throughout, but requires a subproblem solver for~\eqref{eq:partially-eliminated-subproblem} capable of handling that constraint type; encoding it into $h$ removes that requirement but forces the solver to discover feasibility through penalization.

Numerical experiments on low-rank matrix completion, semidefinite programming, and complementarity-constrained optimization confirm that Envelopt is competitive with or outperforms state-of-the-art solvers, including on degenerate problems where standard methods fail.

We provide a Julia implementation of Envelopt with examples named \texttt{Envelopt.jl} that is available from \https{github.com/JuliaSmoothOptimizers/Envelopt.jl} as part of the JSO ecosystem.

\subsection*{Contributions}

Our main contributions are as follows:
\begin{enumerate}
    \item we provide a globally-convergent general framework for constrained convex composite optimization based on the Moreau envelope that entirely relies on smooth subproblems---see \Cref{sec:methodology};
    \item our convergence analysis is based on asymptotic satisfaction of stationarity conditions and does not assume any constraint qualification--see \Cref{sec:analysis};
    \item we extract the features of a concrete algorithm that ensure convergence and discuss whether classic augmented-Lagrangian methods have those features--see \Cref{alg:envelopt} and \Cref{asm:envelopt-multipliers};
    \item we provide a worst-case complexity analysis whether the penalty parameter remains bounded away from zero or not---see \Cref{thm:complexity-mu-bounded} and \Cref{thm:complexity-mu-zero};
    \item we review important extensions and applications of our framework, particularly NCL, exact penalty methods, and the proximal operator of composite sums---see \Cref{sec:extensions-applications};
    \item we provide a complete Julia implementation of an instance of an algorithm that has the required features, named Envelopt---see \Cref{sec:concrete_implementations_envelopt,sec:implementation-details};
\end{enumerate}

\subsection*{Related work}

The augmented Lagrangian method has been a powerful optimization tool for many decades, starting with \citet{hestenes-1969,powell-1969} and \citet{rockafellar1976augmented} for smooth problems.
Several efficient numerical schemes have been proposed, including those of \citet{conn1991globally} and \citet{birgin2014practical}.

In the context of nonlinear programming, \citet{andreani2008augmented} investigated augmented Lagrangian methods in which the lower-level constraints of subproblems need not be simple bounds, proving convergence to approximate KKT points under weak constraint qualifications.
In a related spirit, \citet{birgin2016sequential} proposed SECO, in which subproblems are equality-constrained smooth NLPs while bound constraints are handled by an outer augmented Lagrangian loop.
Envelopt shares this philosophy of splitting constraints between those kept explicit in subproblems and those penalized in outer iterations.

Extensions to nonsmooth problems include those of \citet{rockafellar2022convergence} with \(h\) convex, and \citet{hallak2023adaptive,demarchi2023constrained,demarchi2025penalty} with \(h\) nonconvex.
The latter works hinge around unconstrained or proximal subproblems.
The combination of nonconvex $h$ with general hard constraints $c(x) \in C$ remains an open problem.

\citet{dhingra2019proximal} focus on~\eqref{eq:constrained-convex-composite} with convex $h$, without explicit constraints, and assume that $F$ is a bounded linear operator.
However, they already observed that, by constraining the augmented Lagrangian to the manifold that corresponds to the explicit minimization over the auxiliary variable, the subproblem involves the Moreau envelope of the nonsmooth regularizer and is continuously differentiable.
\citet{dhingra-khong-jovanovic-2022} augment their approach to use a generalized Hessian for the Moreau envelope.
\citet{rockafellar2022convergence} refers to the objective of~\eqref{eq:partially-eliminated-subproblem} as the \emph{generalized} augmented Lagrangian.
These works lay the groundwork for the smooth-subproblem approach that Envelopt builds upon, while being limited to the unconstrained setting $C=\R^m$.

\citet{demarchi2024implicit} addresses~\eqref{eq:constrained-convex-composite} with \(h\) possibly nonconvex, but without hard constraints $c(x)\in C$, using the same partial-minimization approach over the lifting variable.
Envelopt removes this restriction by retaining $c(x) \in C$ explicitly in every subproblem, which can be accounted for by any off-the-shelf constrained solver.

A different line of work avoids the augmented Lagrangian altogether.
The \emph{prox-linear} scheme of \citet{lewis2016proximal} addresses the minimization of functions that are composition of a prox-regular $h$ with a smooth $F$.
Their proximal linearized subproblem involves the composition of $h$ with a linearization of $F$, which can be easily evaluated for well-structured problems only.

When $F=\identity$, \citet{chouzenoux2020proximal} introduce a proximal interior point algorithm able to handle problems with convex $f$, $h$, and inequality constraints.
\citet{leconte-orban-2024} develop an interior-point method for bound-constrained problems in which both $f$ and $h$ are allowed to be nonconvex,
and \citet{demarchi2024interior} further consider nonconvex inequality constraints.

\citet{dai2025proximal} investigate a numerical scheme for the special case where $F = \identity$, $h$ real- and nonnegative-valued, and only equality constraints are present.
\citet{curtis2025proximal} cover nonlinear inequality constraints, but their decomposition procedure accesses the regularizer by solving a constrained subproblem, which is tractable in practice only for special cases such as $h \coloneqq \|\cdot\|_1$.
Envelopt sidesteps these limitations by accessing $h$ only through its proximal operator, inheriting the full generality of the augmented Lagrangian framework while retaining hard constraints explicitly.

\subsection*{Notation}

We use lowercase Latin letters for vectors in \(\R^n\) and lowercase Greek letters for scalars.
Exceptions are that \(f\), \(g\) and \(h\) are functions.
Uppercase Latin letters are sets, except \(F\) and \(L\), which are functions.
For \(x \in \R^n\), we let \(\nabla F(x)\) and \(\nabla c(x)\) be the \emph{gradients} of \(F\) and \(c\) at \(x\), i.e., the \(n \times p\) and \(n \times m\) matrices whose \(i\)-th column is \(\nabla F_i(x)\) and \(\nabla c_i(x)\), respectively, where \(F(x) = (F_1(x), \ldots, F_p(x))\) and \(c(x) = (c_1(x), \ldots, c_m(x))\).
The identity function is denoted \(\identity\).

\section{Background}

The domain of \(h\) is \(\dom h \coloneqq \{x \in \R^n \mid h(x) < +\infty\}\).
\(h\) is proper if \(\dom h \neq \varnothing\) and lower semicontinuous if \(h(x) \leq \liminf_{y \to x} h(y)\) for all \(x \in \R^n\).

If \(C \subseteq \R^n\), the indicator of \(C\) is \(\chi(x \mid C) \coloneqq 0\) if \(x \in C\) and \(+\infty\) otherwise.
When \(C\) is convex, \(\chi(\cdot \mid C)\) is convex as well, and, when \(C\) is closed, \(\chi(\cdot \mid C)\) is lsc.
If \(C\) is closed, the Euclidean distance function to \(C\) is \(\dist(x \mid C) \coloneqq \min \{\|x - y\|_2 \mid y \in C\}\).

For \(f_1\), \(f_2: \R^n \to \R \cup \{\pm \infty\}\), the infimal convolution of \(f_1\) and \(f_2\) is the function denoted \(f_1 \square f_2\) defined by
\[
	(f_1 \square f_2)(x) \coloneqq \inf \{f_1(y) + f_2(x - y) \mid y \in \R^n\}.
\]
A special case is \(\dist(\cdot \mid C) = \chi(\cdot \mid C) \square \|\cdot\|_2\).

The Fenchel conjugate of \(h\) is \(h^*: \R^n \to \R \cup \{+\infty\}\) defined by
\[
	h^*(y) \coloneqq \sup \{y^\top x - h(x) \mid x \in \R^n\}.
\]
The only function that coincides with its Fenchel conjugate is \(\tfrac{1}{2} \|\cdot\|_2^2\).

The epigraph of \(h\) is the set \(\{(x, \alpha) \mid h(x) \leq \alpha\} \subseteq \R^n \times (\R \cup \{\pm \infty\})\).
If \(\{h_k\}\) is a sequence of functions, we say that \(h_k\) epi-converges to \(h\) if the epigraphs of \(h_k\) converge to the epigraph of \(h\) in the Painlevé--Kuratowski sense.

For \(u \in \R^n\) and \(\mu > 0\), the \emphdef{proximal operator} of \(h\) is
\begin{equation}%
	\label{eq:def-prox}
	\prox{\mu h}(u) \coloneqq \argmin{v \in \R^n} \left\{ \tfrac{1}{2} \|v - u\|_2^2 + \mu h(v) \right\}.
\end{equation}
The \emphdef{Moreau envelope} of \(h\) with parameter \(\mu\), introduced by \citet{moreau1965proximite} and denoted \(h_\mu\), is
\begin{equation}%
	\label{eq:def-moreau-envelope}
    h_\mu(u) \coloneqq \inf_{v \in \R^n} \left\{ \tfrac{1}{2} \mu^{-1} \|v - u\|_2^2 + h(v) \right\} = \left(h \square (\tfrac{1}{2} \mu^{-1} \|\cdot\|_2^2\right))(u),
\end{equation}
and is the optimal value function in~\eqref{eq:def-prox}.

Because \(h\) is convex,~\eqref{eq:def-prox} and~\eqref{eq:def-moreau-envelope} enjoy strong properties.

\begin{proposition}[{\protect \citealp[Theorem~\(2.26\)]{rockafellar1998variational}}]%
	\label{prop:prox-moreau}
	Assume that \(h: \R^n \to \R \cup \{+\infty\}\) is proper, lsc and convex.
	For every \(\mu > 0\), \(\prox{\mu h}\) is single valued and continuous, and \(h_\mu\) is convex and \(C^1\) with
	\begin{equation}%
		\label{eq:Moreau_envelope_deriv}
		\nabla h_\mu(u) = \frac{ u - \prox{\mu h}(u) }{ \mu }.
	\end{equation}
\end{proposition}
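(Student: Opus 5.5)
The plan is to work directly from the strongly convex structure of the minimization in~\eqref{eq:def-prox}. For fixed \(u\) and \(\mu>0\), the function \(\phi_u(v) \coloneqq \tfrac{1}{2}\|v-u\|_2^2 + \mu h(v)\) is proper, lsc, and \(1\)-strongly convex, because \(h\) is proper, lsc and convex. Since \(h\) is proper convex lsc, it admits an affine minorant, so \(\phi_u\) is coercive and attains its infimum. Strong convexity then makes the minimizer unique, so \(\prox{\mu h}(u)\) is single valued. Its optimality condition \(0 \in p - u + \mu\,\partial h(p)\), with \(p = \prox{\mu h}(u)\), will be the workhorse for everything that follows.

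The next step is to show that \(\prox{\mu h}\) is firmly nonexpansive, which gives continuity. Take \(p = \prox{\mu h}(u)\) and \(q = \prox{\mu h}(w)\). The optimality conditions give \((u-p)/\mu \in \partial h(p)\) and \((w-q)/\mu \in \partial h(q)\). Monotonicity of \(\partial h\) then yields \(\langle (u-p)-(w-q),\, p-q\rangle \ge 0\), that is, \(\|p-q\|_2^2 \le \langle u-w, p-q\rangle\). Applying Cauchy--Schwarz gives \(\|p-q\|_2 \le \|u-w\|_2\), so \(\prox{\mu h}\) is Lipschitz and in particular continuous.

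Convexity of \(h_\mu\) comes from writing \(h_\mu(u) = \inf_v \Phi(u,v)\) with \(\Phi(u,v) = \tfrac{1}{2}\mu^{-1}\|v-u\|_2^2 + h(v)\). The function \(\Phi\) is jointly convex in \((u,v)\), and the infimal projection of a jointly convex function is convex. This is the standard fact that an infimal convolution of convex functions is convex. The value is finite because the infimum is attained at \(\prox{\mu h}(u)\).

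For differentiability and~\eqref{eq:Moreau_envelope_deriv}, set \(p = \prox{\mu h}(u)\) and \(g = (u-p)/\mu\), and sandwich \(h_\mu(w) - h_\mu(u)\). For the upper bound, plug \(v = p\) into the definition of \(h_\mu(w)\):
\[
h_\mu(w) - h_\mu(u) \le \tfrac{1}{2}\mu^{-1}\bigl(\|p-w\|_2^2 - \|p-u\|_2^2\bigr) = \langle g, w-u\rangle + \tfrac{1}{2}\mu^{-1}\|w-u\|_2^2 .
\]
For the lower bound, swap the roles of \(u\) and \(w\) with \(q = \prox{\mu h}(w)\):
\[
h_\mu(w) - h_\mu(u) \ge \langle (w-q)/\mu, w-u\rangle - \tfrac{1}{2}\mu^{-1}\|w-u\|_2^2 .
\]
Rewrite \((w-q)/\mu = g + \bigl((w-u) - (q-p)\bigr)/\mu\), whose difference from \(g\) is \(O(\|w-u\|_2)\) by the nonexpansiveness shown above. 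Hence \(|h_\mu(w) - h_\mu(u) - \langle g, w-u\rangle| = O(\|w-u\|_2^2)\), so \(h_\mu\) is differentiable with \(\nabla h_\mu(u) = g\), which is~\eqref{eq:Moreau_envelope_deriv}. Continuity of this gradient follows from continuity of \(\prox{\mu h}\), so \(h_\mu\) is \(C^1\); in fact \(\nabla h_\mu\) is \(\mu^{-1}\)-Lipschitz.

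The main obstacle is the differentiability step: one must obtain a \emph{two-sided} first-order expansion rather than only a subgradient inclusion. I would resolve it exactly through the sandwich above, with nonexpansiveness of the proximal map supplying the control needed in the lower bound. Everything else reduces to the existence and uniqueness of minimizers of strongly convex lsc functions.
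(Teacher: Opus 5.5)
The paper gives no proof of this proposition. It imports it from Rockafellar--Wets (Theorem~2.26), so the meaningful comparison is with the textbook argument behind that citation. Your proof is correct and self-contained.

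It shares two steps with the textbook argument. The first is uniqueness of the proximal point from strong convexity and coercivity, via an affine minorant. The second is convexity of \(h_\mu\) as the inf-projection of the jointly convex function \((u,v) \mapsto \tfrac12\mu^{-1}\|v-u\|_2^2 + h(v)\).

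The other two steps differ, as far as I recall the textbook proof. There, continuity of \(\prox{\mu h}\) is qualitative: it comes from the general boundedness and closed-graph properties of proximal mappings together with single-valuedness, which also gives joint continuity in \((\mu,u)\). The lower half of the differentiability sandwich comes from convexity of \(h_\mu\) rather than from the prox. Define \(\varphi(d) \coloneqq h_\mu(u+d) - h_\mu(u) - g^\top d\). It is convex, vanishes at \(0\), and satisfies \(\varphi(d) \le \tfrac12\mu^{-1}\|d\|_2^2\) by your upper bound. Hence \(\varphi(d) \ge -\varphi(-d) \ge -\tfrac12\mu^{-1}\|d\|_2^2\), and no subgradients or sum rule are needed.

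You instead use the inclusion \((u-p)/\mu \in \partial h(p)\) and monotonicity of \(\partial h\), and get the lower bound by exchanging \(u\) and \(w\). You never use convexity of \(h_\mu\) for differentiability. What your route buys is quantitative information: nonexpansiveness of \(\prox{\mu h}\), and the \(\mu^{-1}\)-Lipschitz continuity of \(\nabla h_\mu\) that you assert at the end. That constant follows from your firm inequality \(\|p-q\|_2^2 \le (u-w)^\top(p-q)\); plain nonexpansiveness would only give \(2\mu^{-1}\). These are exactly the facts the paper later quotes separately from the same book (Exercise~12.23, Theorems~12.12 and~12.17) in its complexity analysis.
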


Though~\eqref{eq:def-prox} is a set, \Cref{prop:prox-moreau} indicates that it is single-valued under our assumptions, and we will abuse notation and write \(v = \prox{\mu h}(u)\) to mean \(\{v\} = \prox{\mu h}(u)\).
In general, \(h_\mu\) is not twice differentiable, but its gradient is Lipschitz continuous with constant \(\mu^{-1}\) \citep[Exercise~\(12.23\)]{rockafellar1998variational}.

In addition, there is no need to appeal to the general concept of limiting subdifferential of \(h\) \citep[Definition~\(8.3\)b]{rockafellar1998variational} for the latter coincides with the Fréchet subdifferential \citep[Definition~\(8.3\)a]{rockafellar1998variational}, and both coincide with the subdifferential of convex analysis \citep[Proposition~\(8.12\)]{rockafellar1998variational}, i.e.,
\begin{equation}%
	\label{eq:def-subdifferential}
	\partial h(x) \coloneqq \{ v \in \R^n \mid h(y) \geq h(x) + v^\top (y - x) \text{ for all } y \in \R^n \}
	\quad
	(x \in \dom h).
\end{equation}

For \(\bar{y} \in C\), the normal cone to \(C\) at \(\bar{y}\) is
\begin{equation}%
	\label{eq:def-normal-cone}
	N_C(\bar{y}) \coloneqq \{v \in \R^m \mid v^\top (y - \bar{y}) \leq 0 \text{ for all } y \in C \}.
\end{equation}

We say that \(x \in \R^n\) is \emphdef{feasible} for~\eqref{eq:constrained-convex-composite} if \(F(x) \in \dom h\) and \(c(x) \in C\).
Such feasible \(x\) is \emphdef{stationary} for~\eqref{eq:constrained-convex-composite} if there exist \(y \in \R^p\) and \(z \in \R^m\) such that
\begin{subequations}%
	\label{eq:KKT}
	\begin{align}
		0 ={}   & \nabla f(x) + \nabla F(x) y + \nabla c(x) z \\
		y \in{} & \partial h(F(x))                            \\
		z \in{} & N_C(c(x)).
	\end{align}
\end{subequations}

A local minimizer of~\eqref{eq:constrained-convex-composite} is only guaranteed to be stationary under a constraint qualification.
However, an asymptotic concept of stationarity holds without constraint qualification.

Let \(x \in \R^n\) be feasible for~\eqref{eq:constrained-convex-composite}.
We say that \(x\) is \emphdef{asymptotically stationary} if there exist sequences $\{x^k\} \to x$, $\{u^k\} \subseteq \dom h$, $\{y^k\} \subset \R^n$, $\{w^k\} \subset \R^m$ and $\{z^k\} \subset \R^m$ with \(y^k \in \partial h(u^k)\) and \(z^k \in N_C(w^k)\) for all \(k \in \N\) such that
\begin{equation}
	\label{eq:def-AKKT}
	\nabla f(x^k) + \nabla F(x^k) y^k + \nabla c(x^k) z^k \to 0,
    \quad
	F(x^k) - u^k \to 0,
    \quad\text{and}\quad 
	c(x^k) - w^k \to 0.
\end{equation}

In~\eqref{eq:def-AKKT}, the slack variables \(u^k\) and \(w^k\) allow for the constraints $F(x)\in\dom h$ and $c(x)\in C$ to be violated along the iterations, yet satisfied in the limit.
The following result connects local optimality and asymptotic stationarity.

\begin{proposition}[{\protect \citealp[Proposition~\(2.5\)]{demarchi2023constrained}}]
	If $\bar{x} \in \R^n$ is a local minimizer of~\eqref{eq:constrained-convex-composite}, it is asymptotically stationary.
\end{proposition}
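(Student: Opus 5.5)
The plan is a localized quadratic-penalty argument. This is the standard route for proving that local minimizers satisfy an approximate (sequential) KKT condition without any constraint qualification.

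\emph{Lifted problem.} Let \(\bar{x}\) be a local minimizer, so there is \(\delta>0\) with \(f(x)+h(F(x)) \ge f(\bar{x})+h(F(\bar{x}))\) for all feasible \(x\) with \(\|x-\bar{x}\|\le\delta\). I will lift the problem with slacks \(u\in\R^p\) and \(w\in\R^m\), and consider for \(\rho_k \to \infty\) the penalized problems
\[
\minimize{x,u,w}\quad f(x)+h(u)+\tfrac{\rho_k}{2}\|F(x)-u\|^2+\tfrac{\rho_k}{2}\|c(x)-w\|^2+\tfrac12\|x-\bar{x}\|^2
\quad\st\quad \|x-\bar{x}\|\le\delta,\ w\in C .
\]

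\emph{Existence of minimizers.} I expect this step to be the main technical obstacle, because \(u\) ranges over an unbounded set and \(h\) need not be bounded below. I would handle it as follows. Since \(h\) is proper, lsc and convex, it admits an affine minorant \(h(u)\ge a^\top u+b\). The quadratic term \(\tfrac{\rho_k}{2}\|F(x)-u\|^2\) then dominates this minorant, with \(F(x)\) bounded on the ball. Hence the objective is coercive in \(u\), and also in \(w\) for the same reason. Existence of a minimizer \((x^k,u^k,w^k)\) then follows from lower semicontinuity.

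\emph{Convergence to \(\bar{x}\).} Compare the objective value at \((x^k,u^k,w^k)\) with its value at the feasible point \((\bar{x},F(\bar{x}),c(\bar{x}))\). This gives a uniform upper bound on the objective. Combined with the minorant, it yields \(F(x^k)-u^k\to0\) and \(c(x^k)-w^k\to0\). Take any cluster point \(\hat{x}\) of \(\{x^k\}\); from the vanishing residuals one gets \(c(\hat{x})\in C\), and lower semicontinuity of \(h\) along \(u^k\to F(\hat{x})\) gives \(F(\hat{x})\in\dom h\). Lsc further gives
\[
f(\hat{x})+h(F(\hat{x}))+\tfrac12\|\hat{x}-\bar{x}\|^2 \le f(\bar{x})+h(F(\bar{x})).
\]
Local optimality of \(\bar{x}\) then forces \(\hat{x}=\bar{x}\). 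So \(x^k\to\bar{x}\), and for large \(k\) the ball constraint is inactive.

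\emph{Optimality conditions.} For large \(k\), apply Fermat's rule to the penalized problem. Minimality in \(u\) (a convex problem) gives \(y^k \coloneqq \rho_k(F(x^k)-u^k)\in\partial h(u^k)\); in particular \(u^k\in\dom h\). Minimality in \(w\) over the convex set \(C\) gives \(z^k\coloneqq\rho_k(c(x^k)-w^k)\in N_C(w^k)\). Since the problem in \(x\) is smooth and unconstrained near \(\bar{x}\), stationarity in \(x\) gives
\[
\nabla f(x^k)+\nabla F(x^k)y^k+\nabla c(x^k)z^k+(x^k-\bar{x})=0 .
\]
Because \(x^k-\bar{x}\to0\), these sequences verify~\eqref{eq:def-AKKT}, and \(\bar{x}\) is asymptotically stationary.
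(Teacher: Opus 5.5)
The paper does not prove this proposition. It imports it from the cited reference, where the result is stated for a generic smooth-plus-lsc template that allows a nonconvex regularizer. Presumably the statement here then follows by passing through the lifted problem \eqref{eq:lifted-problem} and the companion equivalence lemma the paper also cites. Your localized quadratic-penalty argument is correct and complete as a self-contained proof:
\begin{itemize}
\item The affine minorant of \(h\) gives existence of minimizers even though \(u\) is not localized. It also makes the residuals \(F(x^k)-u^k\) and \(c(x^k)-w^k\) vanish.
\item Lower semicontinuity of \(h\), closedness of \(C\), and the proximal term \(\tfrac12\|x-\bar{x}\|^2\) force every cluster point to equal \(\bar{x}\).
\item The blockwise Fermat rules are legitimate, because a joint global minimizer minimizes in each block with the others frozen. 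They deliver exactly \(y^k\in\partial h(u^k)\), \(u^k\in\dom h\), and \(z^k\in N_C(w^k)\), with the only residual \(x^k-\bar{x}\) sitting in the stationarity equation, as \eqref{eq:def-AKKT} requires.
\end{itemize}

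Compared with the cited route, you trade generality for economy. By localizing only in \(x\) and using the convexity of \(h\) (affine minorant for coercivity, convex Fermat rule in \(u\)), you avoid limiting-subdifferential calculus and the detour through the lifted-problem equivalence. The general result does not need a minorant, since lower semicontinuity on a compact localization set suffices, and it covers nonconvex regularizers, which your argument does not.

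One small point to make explicit: your comparison bound uses \(h(F(\bar{x}))<+\infty\). This holds because a local minimizer is by definition feasible in the paper's sense.
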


Let \(\epsilon > 0\).
For the purpose of designing a stopping condition in an iterative scheme, we say that \(x \in \R^n\) is \emphdef{\(\epsilon\)-stationary} if there exist $u \in \dom h$, $w \in C$, $y \in \partial h(u)$ and $z \in N_C(w)$ such that
\begin{equation}%
	\label{eq:approxKKT}
	\| \nabla f(x) + \nabla F(x) y + \nabla c(x) z \| \leq \epsilon,
	\quad
	\| F(x) - u \| \leq \epsilon
	\quad\text{and}\quad
	\| c(x) - w \| \leq \epsilon.
\end{equation}

\section{Methodology}%
\label{sec:methodology}

At any \(x \in \R^n\), we assume that it is possible to evaluate \(f\) and \(\nabla f\), the proximal operator of \(h\) (with the value attained at the proximal point), and \(F\) and gradient-vector products \(\nabla F(x) v\) for \(v \in \R^p\).

A major obstacle if one wishes to apply a proximal method to~\eqref{eq:constrained-convex-composite} is that the proximal operator of \(h \circ F\) is not available in general, even if that of \(h\) is.

We begin by introducing a lifting variable \(u \in \R^p\) and reformulating~\eqref{eq:constrained-convex-composite} as
\begin{equation}%
	\label{eq:lifted-problem}
	\minimize{x \in \R^n, u \in \R^p}\quad
	f(x) + h(u)
	\quad \st \quad
	F(x) - u = 0, \quad c(x) \in C.
\end{equation}
As may be expected,~\eqref{eq:lifted-problem} is related to~\eqref{eq:constrained-convex-composite} in the following way.

\begin{lemma}[{\protect \citealp[Lemma~\(3.1\)]{demarchi2023constrained}}]
	A feasible point $\bar{x} \in \R^n$ of~\eqref{eq:constrained-convex-composite} is asymptotically stationary for~\eqref{eq:constrained-convex-composite} if and only if $(\bar{x}, F(\bar{x}))$ is asymptotically stationary for~\eqref{eq:lifted-problem}.
\end{lemma}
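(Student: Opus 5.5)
The proof is a direct translation between the two sets of asymptotic certificates. First I will pin down what asymptotic stationarity means for~\eqref{eq:lifted-problem}. I view it as an instance of~\eqref{eq:constrained-convex-composite} in the variable \((x,u)\), with these identifications:
\begin{itemize}
\item smooth part \(f(x)\);
\item inner map \((x,u)\mapsto u\) composed with \(h\);
\item constraint map \((x,u)\mapsto (F(x)-u,\,c(x))\in\{0\}\times C\).
\end{itemize}
Since \(N_{\{0\}}(0)=\R^p\), the definition~\eqref{eq:def-AKKT} then reads as follows. There exist \((x^k,u^k)\to(\bar x,F(\bar x))\), \(\tilde u^k\in\dom h\), \(y^k\in\partial h(\tilde u^k)\), arbitrary \(\lambda^k\in\R^p\), \(w^k\), and \(z^k\in N_C(w^k)\), such that
\begin{equation*}
\nabla f(x^k)+\nabla F(x^k)\lambda^k+\nabla c(x^k)z^k\to 0,\quad
y^k-\lambda^k\to 0,\quad
u^k-\tilde u^k\to 0,
\end{equation*}
together with \(F(x^k)-u^k\to0\) and \(c(x^k)-w^k\to 0\). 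The \(u\)-block of the constraint Jacobian is \(-\identity\), so the sign of \(\lambda^k\) may need to be flipped; this is immaterial because \(\lambda^k\) is free.

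\emph{Forward direction.} Take sequences \(x^k,u^k,y^k,w^k,z^k\) certifying~\eqref{eq:def-AKKT} for \(\bar x\). For the lifted problem I choose:
\begin{itemize}
\item the lifted iterate \((x^k,F(x^k))\);
\item the slack \(\tilde u^k\coloneqq u^k\), with the same \(y^k\in\partial h(u^k)\);
\item the multiplier \(\lambda^k\coloneqq y^k\);
\item the same \(w^k,z^k\).
\end{itemize}
The \(x\)-residual is then unchanged, and \(y^k-\lambda^k=0\). The equality constraint \(F(x^k)-F(x^k)=0\) holds exactly. The slack gap is \(F(x^k)-u^k\to0\). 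Finally, \((x^k,F(x^k))\to(\bar x,F(\bar x))\) by continuity of \(F\).

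\emph{Converse direction.} Take lifted sequences as above and use \(x^k\), \(\tilde u^k\), \(y^k\), \(w^k\), \(z^k\) for~\eqref{eq:constrained-convex-composite}. The slack condition follows from
\begin{equation*}
F(x^k)-\tilde u^k=(F(x^k)-u^k)+(u^k-\tilde u^k)\to0 .
\end{equation*}
For the stationarity residual, write
\begin{equation*}
\nabla f(x^k)+\nabla F(x^k)y^k+\nabla c(x^k)z^k
=\bigl[\nabla f(x^k)+\nabla F(x^k)\lambda^k+\nabla c(x^k)z^k\bigr]+\nabla F(x^k)(y^k-\lambda^k).
\end{equation*}
The bracketed term tends to zero by assumption. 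The last term tends to zero because \(y^k-\lambda^k\to0\) and \(\nabla F(x^k)\) stays bounded, since \(x^k\to\bar x\) and \(F\) is \(C^1\).

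The only delicate step is the first one: fixing the precise form of asymptotic stationarity for~\eqref{eq:lifted-problem}, where \(h\) acts on \(u\) directly and the equality constraint carries a free multiplier. Once that form is settled, both directions are the bookkeeping above, and the single analytic ingredient is the local boundedness of \(\nabla F\).
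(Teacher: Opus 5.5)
Your proof is correct. The paper gives no argument of its own and imports the result from \citet{demarchi2023constrained}, and your direct translation of certificates is the natural proof of it: you cast \eqref{eq:lifted-problem} as an instance of \eqref{eq:constrained-convex-composite} with inner map $(x,u)\mapsto u$ and constraint set $\{0\}\times C$, use $(x^k,F(x^k))$ with $\lambda^k=y^k$ in one direction, and absorb $\nabla F(x^k)(y^k-\lambda^k)\to 0$ via local boundedness of $\nabla F$ in the other. Two cosmetic points: the $u$-block comes out as exactly $y^k-\lambda^k$, so no sign flip is needed, and you should state that $(\bar x,F(\bar x))$ is feasible for \eqref{eq:lifted-problem} (because $F(\bar x)\in\dom h$ and $c(\bar x)\in C$), since the definition of asymptotic stationarity presupposes feasibility.
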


At this point, a nonsmooth augmented-Lagrangian method such as ALPS \citep{demarchi2023constrained} could be applied to~\eqref{eq:lifted-problem}.
However,~\eqref{eq:lifted-problem} has \(p\) more variables and constraints than~\eqref{eq:constrained-convex-composite}.
Fortunately, the augmented-Lagrangian subproblem associated with~\eqref{eq:lifted-problem} is amenable to partial minimization with respect to \(u\), and that results in a smooth problem in \(x\) alone involving the Moreau envelope of \(h\).
We now detail the procedure.

Let \(\mu > 0\) be a penalty parameter and \(\hat{y} \in \R^p\) be a vector of Lagrange multiplier estimates for the equality constraint \(F(x) - u = 0\).
The (partial) augmented Lagrangian function for~\eqref{eq:lifted-problem} is
\begin{equation}%
	\label{eq:augmented-lagrangian}
	\begin{aligned}
		\lagr(x, u; \hat{y}, \mu) \coloneqq{} & f(x) + h(u) + \hat{y}^\top (F(x) - u) + \tfrac{1}{2} \mu^{-1} \|F(x) - u\|_2^2
		\\
		={}                                   & f(x) + h(u) + \tfrac{1}{2} \mu^{-1} \|F(x) - u + \mu \hat{y}\|_2^2 - \tfrac{1}{2} \mu \|\hat{y}\|_2^2.
	\end{aligned}
\end{equation}
For fixed \(\hat{y}\) and \(\mu > 0\), the augmented-Lagrangian subproblem is
\begin{equation}%
	\label{eq:augmented-lagrangian-subproblem}
	\minimize{x \in \R^n, u \in \R^p}\quad
	\lagr(x, u; \hat{y}, \mu)
	\quad
	\st \quad c(x) \in C.
\end{equation}
In~\eqref{eq:augmented-lagrangian-subproblem}, we kept the constraints \(c(x) \in C\) explicit, while we relaxed \(F(x) = u\) and penalized its violation.
Moreover, \(u\) is unconstrained, and the objective is strongly convex in \(u\).
Hence, partially minimizing~\eqref{eq:augmented-lagrangian} with respect to \(u\) is appealing.
By~\eqref{eq:def-moreau-envelope}, the optimal value of~\eqref{eq:augmented-lagrangian} with respect to \(u\) is
\begin{equation}%
	\label{eq:def-partially-eliminated-AL}
	\begin{aligned}
		L(x; \hat{y}, \mu) \coloneqq{} &
		\min_{u \in \R^p} \lagr(x, u; \hat{y}, \mu) = f(x) + h_\mu(F(x) + \mu \hat{y}) - \tfrac{1}{2} \mu \|\hat{y}\|_2^2
		\\
		={}                            & \lagr(x, u(x; \hat{y}, \mu); \hat{y}, \mu),
	\end{aligned}
\end{equation}
where \(u(x; \hat{y}, \mu) = \prox{\mu h}(F(x) + \mu \hat{y}) \in \dom h\) realizes the minimum.
Thus, we replace~\eqref{eq:augmented-lagrangian-subproblem} with the partially-eliminated augmented-Lagrangian subproblem~\eqref{eq:partially-eliminated-subproblem}, which has a decision space of dimension $n$ instead of $n+p$, and whose objective and constraints are \(C^1\) by \Cref{prop:prox-moreau} and our assumptions on \(f\), \(F\) and \(c\).

For reference, \Cref{prop:prox-moreau} implies that the gradient of the objective in~\eqref{eq:partially-eliminated-subproblem} is
\begin{equation*}
	\nabla f(x) + \nabla F(x) \nabla h_\mu(F(x) + \mu \hat{y}) =
	\nabla f(x) + \nabla F(x) y(x; \hat{y}, \mu) .
\end{equation*}
where
\begin{equation}
	y(x; \hat{y}, \mu)
	\coloneqq
	\hat{y} + \frac{ F(x) - u(x; \hat{y}, \mu) }{ \mu }
	.
\end{equation}
Thus, \(x \in \R^n\) is stationary for~\eqref{eq:partially-eliminated-subproblem} if there exists \(z \in \R^m\) such that
\begin{equation}%
	\label{eq:KKT_subproblem}
	\nabla f(x) + \nabla F(x) y(x; \hat{y}, \mu) + \nabla c(x) z = 0,
    \quad\text{and}\quad
	z \in N_C(c(x)).
\end{equation}

For \(\epsilon > 0\), we say that \(x \in \R^n\) is \(\epsilon\)-stationary for~\eqref{eq:partially-eliminated-subproblem} if there exist \(w \in C\) and \(z \in N_C(w)\) such that
\begin{equation}%
	\label{eq:approxKKT_subproblem}
	\| \nabla f(x) + \nabla F(x) y(x; \hat{y}, \mu) + \nabla c(x) z \| \leq \epsilon,
    \quad\text{and}\quad
	\| c(x) - w \| \leq \epsilon.
\end{equation}

\begin{lemma}%
	\label{lem:eps-stationarity}
	Let \(\mu > 0\), \(\hat{y}\in\R^p\), \(\epsilon > 0\), and \(x \in \R^n\) be \(\epsilon\)-stationary for~\eqref{eq:partially-eliminated-subproblem}.
	If \(\|F(x) - u(x; \hat{y}, \mu)\| \leq \epsilon\),
	then \(x\) is \(\epsilon\)-stationary for~\eqref{eq:constrained-convex-composite}.
\end{lemma}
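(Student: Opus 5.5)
The plan is to exhibit explicit witnesses \(u\), \(w\), \(y\), \(z\) for the definition~\eqref{eq:approxKKT} of \(\epsilon\)-stationarity for~\eqref{eq:constrained-convex-composite}, taking them directly from the subproblem data. By the definition~\eqref{eq:approxKKT_subproblem}, \(\epsilon\)-stationarity of \(x\) for~\eqref{eq:partially-eliminated-subproblem} provides \(w \in C\) and \(z \in N_C(w)\) with \(\|\nabla f(x) + \nabla F(x) y(x;\hat{y},\mu) + \nabla c(x) z\| \leq \epsilon\) and \(\|c(x) - w\| \leq \epsilon\). I will keep this pair \((w, z)\) unchanged. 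For the remaining witnesses I will set \(u \coloneqq u(x;\hat{y},\mu) = \prox{\mu h}(F(x) + \mu\hat{y})\) and \(y \coloneqq y(x;\hat{y},\mu)\).

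With these choices, the first and third inequalities in~\eqref{eq:approxKKT} are exactly the two inequalities supplied by~\eqref{eq:approxKKT_subproblem}. The middle inequality \(\|F(x) - u\| \leq \epsilon\) is precisely the hypothesis of the lemma. Also, \(u \in \dom h\) already holds, since the proximal point attains a finite value in~\eqref{eq:def-prox} because \(h\) is proper, as noted after~\eqref{eq:def-partially-eliminated-AL}.

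The only substantive step is to verify \(y \in \partial h(u)\). Write \(v \coloneqq F(x) + \mu\hat{y}\). Then
\[
y = \hat{y} + \mu^{-1}\bigl(F(x) - u\bigr) = \mu^{-1}\bigl(v - \prox{\mu h}(v)\bigr) = \nabla h_\mu(v)
\]
by~\eqref{eq:Moreau_envelope_deriv}. I will then use the optimality condition of the strongly convex problem~\eqref{eq:def-prox}. Since \(\tfrac12\|\cdot - v\|^2\) is finite and differentiable, the convex sum rule gives \(0 \in u - v + \mu\,\partial h(u)\), that is, \(\mu^{-1}(v - u) \in \partial h(u)\). This is exactly the claim \(y \in \partial h(u)\).

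If one prefers to avoid the sum rule, the same inclusion can be checked directly against the definition~\eqref{eq:def-subdifferential}. Compare the prox objective at \(u\) with its value at \(u + t(y' - u)\) for \(t \in (0,1)\), use convexity of \(h\), and let \(t \downarrow 0\). This step is the main, though mild, obstacle, and once it is settled all four conditions in~\eqref{eq:approxKKT} hold, which proves the lemma.
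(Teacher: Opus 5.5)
Your proposal is correct and matches the paper's proof. You reuse the witnesses \(w, z\) from~\eqref{eq:approxKKT_subproblem}, take \(u = u(x;\hat{y},\mu)\) and \(y = y(x;\hat{y},\mu)\), and derive \(y \in \partial h(u)\) from the optimality condition of~\eqref{eq:def-prox}, spelling out a step the paper states in one line.
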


\begin{proof}
	By assumption, there exist \(w \in C\) and \(z \in N_C(w)\) such that~\eqref{eq:approxKKT_subproblem} holds.
	By definition of \(u(x; \hat{y}, \mu)\), \(y(x; \hat{y}, \mu)\) and~\eqref{eq:def-prox},
	\(y(x; \hat{y}, \mu) \in \partial h(u(x; \hat{y}, \mu))\) holds, and so does~\eqref{eq:approxKKT}.
\end{proof}

\section{Algorithm and convergence analysis}%
\label{sec:analysis}

We state a generic template algorithm to solve~\eqref{eq:constrained-convex-composite} by solving a sequence of problems of the form~\eqref{eq:partially-eliminated-subproblem} as \Cref{alg:envelopt}.
Several concrete algorithms fit the template and will be detailed in \Cref{sec:concrete_implementations_envelopt}.
For the purposes of the convergence analysis, we state the features that such a concrete algorithm should possess.

\begin{algorithm}[ht]%
	\caption[caption]{%
		\label{alg:envelopt}
		Envelopt: template algorithm 
	}
	\begin{algorithmic}[1]%
		\State Choose a stopping tolerance \(\epsilon > 0\).
		\For{\(k = 0, 1, \ldots\)}
		\State%
		\label{step:select-mu-y}%
		Select suitable \(\epsilon_k > 0\), \(\mu_k > 0\), and \(\hat{y}_k \in \R^p\).
		\State Find an \(\epsilon_k\)-stationary point \(x_k\) of~\eqref{eq:partially-eliminated-subproblem}.
		\State Compute \(u_k = u(x_k; \hat{y}_k, \mu_k)\) and \(y_k = y(x_k; \hat{y}_k, \mu_k)\).
		\State%
		\label{stp:eps-kkt-check}%
		If \(x_k\) is \(\epsilon\)-stationary for~\eqref{eq:constrained-convex-composite}, return \(x_k\).
		\EndFor
	\end{algorithmic}
\end{algorithm}

Let \(x_k\) and \(u_k\) be generated by \Cref{alg:envelopt} at iteration \(k\).
By \Cref{lem:eps-stationarity}, if \(\epsilon_k \leq \epsilon\) and \(\|F(x_k) - u_k\| \leq \epsilon\), \(x_k\) is an \(\epsilon\)-stationary point of~\eqref{eq:constrained-convex-composite}, and \Cref{alg:envelopt} terminates at step~\ref{stp:eps-kkt-check}.

\subsection{Convergence analysis}

We make the following assumption that a concrete implementation of \Cref{alg:envelopt} should satisfy.

\begin{assumption}%
	\label{asm:envelopt-multipliers}
	The sequences \(\{\mu_k\}\), \(\{x_k\}\), \(\{u_k\}\) and \(\{\hat{y}_k\}\) generated by \Cref{alg:envelopt} are such that
	\begin{enumerate}[label=(\roman{*})]
		\item if there exists \(\mu > 0\) such that \(\mu_k \geq \mu\) for all \(k\), then, \(\{F(x_k) - u_k\} \to 0\);
		\item if there is an index set \(K \subseteq \N\) such that \(\{\mu_k\}_K \to 0\), then \(\{\mu_k \hat{y}_k\}_K \to 0\).
	\end{enumerate}
\end{assumption}

The two conditions in \Cref{asm:envelopt-multipliers} are related to mutually exclusive scenarios:
if the penalty parameter remains bounded away from zero, then the iterates approach feasibility, otherwise the Lagrange multiplier estimates ``do not behave too badly,'' as suggested by \citet[\S 3]{conn1991globally}.

\begin{theorem}%
	\label{thm:convergence_AKKT}
	Let \(\{x_k\}\) be a sequence generated by \Cref{alg:envelopt} with \(\{\epsilon_k\} \to 0\) and let \Cref{asm:envelopt-multipliers} be satisfied.
	Then, any limit point of \(\{x_k\}\) that is feasible for~\eqref{eq:constrained-convex-composite} is asymptotically stationary for~\eqref{eq:constrained-convex-composite}.
\end{theorem}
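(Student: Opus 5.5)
Let \(\bar{x}\) be a feasible limit point of \(\{x_k\}\) and fix an index set \(K\subseteq\N\) with \(\{x_k\}_K \to \bar{x}\).
I will exhibit the sequences required by~\eqref{eq:def-AKKT} along \(K\), taking the iterates themselves as \(\{x^k\}\) and choosing \(u^k \coloneqq u_k\), \(y^k \coloneqq y_k\), and \(w^k, z^k\) as the points furnished by \(\epsilon_k\)-stationarity of \(x_k\) for~\eqref{eq:partially-eliminated-subproblem}.
The proof of \Cref{lem:eps-stationarity} already gives \(u_k \in \dom h\) and \(y_k \in \partial h(u_k)\).
By~\eqref{eq:approxKKT_subproblem} we have \(w_k \in C\), \(z_k \in N_C(w_k)\), \(\|\nabla f(x_k) + \nabla F(x_k) y_k + \nabla c(x_k) z_k\| \le \epsilon_k \to 0\), and \(\|c(x_k) - w_k\| \le \epsilon_k \to 0\).
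Thus the first and third conditions in~\eqref{eq:def-AKKT} hold automatically.
The only remaining task is to show \(\{F(x_k) - u_k\}_K \to 0\).

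For this I split into two cases according to \(\{\mu_k\}_K\).
If \(\liminf_{k\in K}\mu_k > 0\), I want to invoke \Cref{asm:envelopt-multipliers}(i).
The difficulty is that (i) is stated under a uniform lower bound over all \(k\).
I would read the assumption along subsequences, or else note that if \(\mu_k\) is not bounded away from zero on \(\N\), one can pass to the relevant subsequence.
Alternatively, I would argue via the dichotomy: either \(\mu_k\ge\mu>0\) for all \(k\), in which case (i) gives \(F(x_k)-u_k\to0\) on all of \(\N\); or some subsequence tends to zero.
The mixed situation, where \(\mu_k\) stays bounded below on \(K\) but tends to zero elsewhere, is the delicate point.
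I expect to handle it either by the subsequence reading of (i) or by assuming, as is standard for such algorithms, that \(\{\mu_k\}\) is nonincreasing.
Under monotonicity, \(\liminf_{k\in K}\mu_k>0\) forces \(\mu_k\ge\mu\) for all \(k\).

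If instead \(\{\mu_k\}_{K'} \to 0\) along some further subsequence \(K'\subseteq K\), which I may assume after refining, then (ii) gives \(\mu_k\hat y_k \to 0\) on \(K'\).
Now \(u_k = \prox{\mu_k h}(F(x_k)+\mu_k\hat y_k)\), and \(F(x_k)+\mu_k\hat y_k \to F(\bar x) \in \dom h\) by continuity and feasibility.
The key estimate is that \(\prox{\mu h}(v_k) \to \bar v\) whenever \(v_k\to\bar v\in\dom h\) and \(\mu\downarrow0\).
I would prove this via nonexpansiveness of the prox:
\[
\|\prox{\mu_k h}(v_k) - \bar v\| \le \|v_k - \bar v\| + \|\prox{\mu_k h}(\bar v) - \bar v\|,
\]
so it suffices that \(\prox{\mu h}(\bar v)\to \bar v\) as \(\mu\downarrow 0\) for \(\bar v\in\dom h\).
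That follows from comparing the prox objective with the candidate \(\bar v\).
Take an affine minorant \(h\ge a^\top(\cdot)+b\) and set \(p_\mu=\prox{\mu h}(\bar v)\).
Then
\[
\tfrac12\|p_\mu-\bar v\|^2 \le \mu\bigl(h(\bar v) - a^\top p_\mu - b\bigr),
\]
and the right-hand side is \(O(\mu(1+\|p_\mu-\bar v\|))\), giving \(\|p_\mu - \bar v\| = O(\sqrt{\mu})\to0\).
Hence \(u_k \to F(\bar x)\) and \(F(x_k)-u_k\to 0\) on \(K'\).

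Combining both cases yields the required sequences along a subsequence converging to \(\bar x\), so \(\bar x\) is asymptotically stationary.
The main obstacle is the case distinction: matching the global phrasing of \Cref{asm:envelopt-multipliers}(i) with subsequential behaviour.
The prox-continuity estimate is routine.
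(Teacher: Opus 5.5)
Your proof is correct once $\{\mu_k\}$ is assumed nonincreasing (or (i) is read along subsequences), and the delicate point you flag is a real issue. The paper's proof makes the same step tacitly. It passes from the failure of ``$\mu_k\ge\mu>0$ for all $k$'' directly to ``$\{\mu_k\}_K\to0$''. That step is justified only under monotonicity, which holds for \Cref{alg:envelopt-bcl} and under \Cref{asm:complexity} but is not imposed by the template \Cref{alg:envelopt}.

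The extra hypothesis cannot be dropped. Take $n=p=1$, no hard constraints, $f(x)=-x$, $F=\identity$, $h=\chi(\cdot\mid(-\infty,0])$, and switch off the stopping test. Set $(\mu_k,\hat y_k)=(1,2)$ for even $k$ and $(1/k,0)$ for odd $k$. The stationary point of~\eqref{eq:partially-eliminated-subproblem} is $x_k=\mu_k(1-\hat y_k)$, so $x_k=-1$ for every even $k$. Condition (i) is vacuous, and (ii) holds because $\hat y_k=0$ whenever $\mu_k<1$. Yet $\bar x=-1$ is feasible and not asymptotically stationary: $x^k\to-1$ and $F(x^k)-u^k\to0$ eventually force $u^k<0$, hence $y^k=0$, so the residual $-1+y^k$ is eventually $-1$. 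Your proposed fix is therefore exactly what is needed.

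For the case $\mu_k\to0$, your argument takes a different route from the paper's. The paper writes the prox optimality inequality for $u_k$ at the perturbed point $F(x_k)+\mu_k\hat y_k$ against the comparison point $F(\bar x)$ and takes a $\liminf$ over $K$. It then uses an affine minorant of $h$ to exclude $\|u_k\|\to\infty$, extracts a convergent subsequence, and argues by contradiction. It thus obtains only $\liminf_{k\in K}\|F(\bar x)-u_k\|=0$ and must refine $K$ once more.

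You instead use nonexpansiveness of $\prox{\mu_k h}$ to split off the perturbation $\|F(x_k)+\mu_k\hat y_k-F(\bar x)\|$. You then apply the comparison-plus-minorant inequality only at the fixed point $F(\bar x)\in\dom h$, where it gives the explicit bound $\|\prox{\mu h}(F(\bar x))-F(\bar x)\|=O(\sqrt{\mu})$. This removes the compactness step and the $\liminf$/subsequence bookkeeping. It yields $F(x_k)-u_k\to0$ along the whole subsequence on which $\mu_k\to0$, and even a rate in terms of $\|F(x_k)-F(\bar x)\|$, $\mu_k\|\hat y_k\|$ and $\sqrt{\mu_k}$. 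The paper's route, in exchange, needs only the defining inequality of the prox and a minorant of $h$, not nonexpansiveness, which the paper invokes only later in its complexity analysis.
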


\begin{proof}
	By construction, each \(x_k\) is \(\epsilon_k\)-stationary for~\eqref{eq:partially-eliminated-subproblem}.
	Let \(\bar{x}\) be a limit point of \(\{x_k\}\) that is feasible for~\eqref{eq:constrained-convex-composite}, and let \(K \subseteq \N\) be an index set such that \(\{x_k\}_K \to \bar{x}\).
	Because \(\{\epsilon_k\}_K \to 0\), \Cref{lem:eps-stationarity} implies that, if \(\{F(x_k) - u_k\}_K \to 0\), \(\bar{x}\) is asymptotically stationary for~\eqref{eq:constrained-convex-composite}.

	There are two cases.
	In the first case, there exists \(\mu > 0\) such that \(\mu_k \geq \mu\) for all \(k\).
	Clearly, \Cref{asm:envelopt-multipliers} then implies that \(\{F(x_k) - u_k\}_K \to 0\).

	In the second case, \(\{\mu_k\}_K \to 0\).
	By definition of \(u_k\), for all \(u \in \R^p\) and all \(k\),
	\[
		\tfrac{1}{2} \|F(x_k) + \mu_k \hat{y}_k - u_k\|_2^2 + \mu_k h(u_k) \leq \tfrac{1}{2} \|F(x_k) + \mu_k \hat{y}_k - u\|_2^2 + \mu_k h(u).
	\]
	In particular, for \(u = F(\bar{x})\in\dom h\),
	\[
		\tfrac{1}{2} \|F(x_k) + \mu_k \hat{y}_k - u_k\|_2^2 + \mu_k h(u_k) \leq \tfrac{1}{2} \|F(x_k) + \mu_k \hat{y}_k - F(\bar{x})\|_2^2 + \mu_k h(F(\bar{x})).
	\]
	We take the limit inferior over \(k \in K\) on both sides and use \Cref{asm:envelopt-multipliers} to obtain
	\begin{equation}%
		\label{eq:limit-inequality}
		\liminf_{k \in K} \tfrac{1}{2} \|F(\bar{x}) - u_k\|_2^2 + \mu_k h(u_k) \leq 0.
	\end{equation}
	We now wish to establish that \(\liminf_{k \in K} \|F(\bar{x}) - u_k\|_2 = 0\).

	We first show that \(\{u_k\}_K\) has a bounded subsequence.
	If it were not the case, \(\liminf_{k \in K} \|F(\bar{x}) - u_k\|_2 = +\infty\).
	By convexity, \(h\) is bounded from below by an affine function, i.e., there exist \(a \in \R^p\) and \(b \in \R\) such that \(h(u) \geq a^\top u + b\) for all \(u \in \R^p\).
	Thus,
	\[
		\liminf_{k \in K} \tfrac{1}{2} \|F(\bar{x}) - u_k\|_2^2 + \mu_k (a^\top u_k + b) = \liminf_{k \in K} \tfrac{1}{2} \|F(\bar{x}) - u_k\|_2^2 + \mu_k a^\top u_k \leq 0,
	\]
	which can be written
	\[
		\liminf_{k \in K} \tfrac{1}{2} \|u_k\|^2 + (\mu_k a - F(\bar{x}))^\top u_k + \tfrac{1}{2} \|F(\bar{x})\|_2^2 \leq 0.
	\]
	But the above is impossible as the value of the left-hand side is \(+\infty\).
	Thus, reducing to a further subsequence if necessary,  we may assume without loss of generality that \(\{u_k\}_K\) is bounded.
	Consequently, \(\{F(\bar{x}) - u_k\}_K\) is bounded as well.

	Assume by contradiction that \(\liminf_{k \in K} \|F(\bar{x}) - u_k\|_2 \coloneqq \delta > 0\).
	Without loss of generality, we may assume that \(\{u_k\}_K \to \bar{u}\).
	We now obtain from~\eqref{eq:limit-inequality} that \(\tfrac{1}{2} \|F(\bar{x}) - \bar{u}\|_2^2 \leq 0\), which contradicts the assumption that \(\delta > 0\).
	The above establishes that \(\liminf_{k \in K} \|F(\bar{x}) - u_k\|_2 = 0\).
	Again, reducing to a further subsequence if necessary, we obtain that \(\bar{x}\) is asymptotically stationary for~\eqref{eq:constrained-convex-composite}.
\end{proof}

\subsection{Infeasibility detection}%
\label{sec:infeasibility-detection}

There are two ways in which~\eqref{eq:constrained-convex-composite} can be infeasible.
The first is that there is no \(x \in \R^n\) such that \(c(x) \in C\).
In order to detect such situation, we rest upon the subproblem solver since those constraints are kept explicitly in~\eqref{eq:partially-eliminated-subproblem}.
Thus, in this section, we assume that \(c(x) \in C\) is feasible.
The second way is that there is no \(x \in \R^n\) such that \(c(x) \in C\) and \(F(x) \in \dom h\).
In order to detect this second situation, we formulate the feasibility problem
\begin{equation}%
	\label{eq:feasibility-problem}
	\minimize{x \in \R^n}\quad
	\tfrac{1}{2} \dist(F(x) \mid \widebar{\dom h})^2
	\quad \st \quad
	c(x) \in C.
\end{equation}
In general,~\eqref{eq:feasibility-problem} is impractical as we may not know \(\dom h\) explicitly, let alone how to compute the distance to its closure.
However,~\eqref{eq:feasibility-problem} captures the concept of infeasibility for~\eqref{eq:constrained-convex-composite} because, as in the proof of \Cref{thm:convergence_AKKT}, if \(\{x_k\}\) has a feasible limit point, \(\{F(x_k) - u_k\} \to 0\) with \(u_k \in \dom h\) for all \(k\).
Thus, if~\eqref{eq:constrained-convex-composite} is feasible, we may hope that \(F(x_k) \in \widebar{\dom h}\) in the limit while the subproblem solver ensures that \(\dist(c(x_k) \mid C) \to 0\).

To identify cases where the optimal value of~\eqref{eq:feasibility-problem} is positive, we now argue that an appropriate proxy for~\eqref{eq:feasibility-problem} is
\begin{equation}%
	\label{eq:proxy-feasibility-problem}
	\minimize{x \in \R^n}\quad
	\mu h_\mu(F(x))
	\quad \st \quad
	c(x) \in C.
\end{equation}

The next result shows that, as \(\{\mu_k\} \to 0\), \(\mu_k h\) flattens to the zero function over the closure of its domain.

\begin{lemma}%
	\label{lem:epi-muh}
	For \(h: \R^n \to \R \cup \{+\infty\}\) proper, lsc and convex, and any positive nonincreasing sequence \(\{\mu_k\} \to 0\),
	\[
		\elim_{k \to \infty} \mu_k h = \chi(\cdot \mid \widebar{\dom h}).
	\]
\end{lemma}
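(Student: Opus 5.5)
We verify epi-convergence through the standard sequential characterization \citep[Proposition~7.2]{rockafellar1998variational}. Write \(g \coloneqq \chi(\cdot \mid \widebar{\dom h})\). We must show two things at every \(x \in \R^n\):
\begin{enumerate}[label=(\alph*)]
	\item \(\liminf_k \mu_k h(x_k) \geq g(x)\) for every sequence \(x_k \to x\);
	\item there exists a sequence \(x_k \to x\) with \(\limsup_k \mu_k h(x_k) \leq g(x)\).
\end{enumerate}
The argument rests on two facts from convexity. First, \(h\) admits an affine minorant \(h(u) \geq a^\top u + b\), exactly as in the proof of \Cref{thm:convergence_AKKT}. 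Second, \(h\) restricted to a segment from a domain point is convex, hence bounded above along that segment by linear interpolation.

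For (a), take \(x_k \to x\). If \(x \in \widebar{\dom h}\), then \(g(x) = 0\). The affine minorant gives \(\mu_k h(x_k) \geq \mu_k (a^\top x_k + b)\), whose right-hand side tends to \(0\) because \(\{x_k\}\) is bounded and \(\mu_k \to 0\). Hence the liminf is at least \(0\). If \(x \notin \widebar{\dom h}\), then \(g(x) = +\infty\). Since the complement of \(\widebar{\dom h}\) is open, \(x_k \notin \dom h\) for all large \(k\). Therefore \(\mu_k h(x_k) = +\infty\) eventually, because \(\mu_k > 0\).

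For (b), the case \(x \notin \widebar{\dom h}\) is trivial, since \(g(x) = +\infty\) and we may take \(x_k = x\). If \(x \in \dom h\), take \(x_k = x\): then \(\mu_k h(x) \to 0\). The main obstacle is a point \(x \in \widebar{\dom h} \setminus \dom h\). There we need a recovery sequence inside \(\dom h\) that approaches \(x\) slowly enough that \(\mu_k h(x_k) \to 0\), even though \(h\) may blow up near \(x\).

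The plan for this obstacle is the following. Fix \(x_0\) in the relative interior of \(\dom h\), which is nonempty since \(h\) is proper and convex. By the line segment principle \citep[Theorem~6.1]{rockafellar1970convex}, \(z_t \coloneqq (1-t)x + t x_0\) belongs to \(\operatorname{ri} \dom h \subseteq \dom h\) for \(t \in (0,1]\). The function \(\phi(t) \coloneqq h(z_t)\) is finite and convex on \((0,1]\). Choose a sequence \(t_k \downarrow 0\) that decreases slowly relative to \(\mu_k\), namely so that \(\mu_k \phi(t_k) \to 0\). This is possible because \(\phi\) is finite at each fixed \(t > 0\) and \(\mu_k \to 0\). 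Concretely, for each \(j\) let \(k_j\) be increasing with \(\mu_k |\phi(1/j)| \leq 1/j\) for all \(k \geq k_j\). Then set \(t_k = 1/j\) for \(k_j \leq k < k_{j+1}\), and \(t_k = 1\) for \(k < k_1\). With \(x_k \coloneqq z_{t_k} \to x\), this gives \(|\mu_k h(x_k)| \leq 1/j\) on each block, so \(\mu_k h(x_k) \to 0 = g(x)\).

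Two remarks close the argument. Monotonicity of \(\{\mu_k\}\) is not really needed here, although it keeps the block construction clean. The same diagonal construction also covers \(x \in \dom h\), so this case needs no separate treatment.
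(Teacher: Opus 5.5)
Your proof is correct and follows the same overall structure as the paper. Both use the sequential characterization of epi-convergence. Both use the affine minorant for the liminf inequality on \(\widebar{\dom h}\) and the openness of its complement elsewhere. Both use constant recovery sequences when \(x \notin \widebar{\dom h}\) or \(x \in \dom h\).

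The only real divergence is the recovery sequence at \(x \in \widebar{\dom h} \setminus \dom h\). The paper takes an arbitrary sequence \(\{w_q\} \subseteq \dom h\) converging to \(x\) and sets \(C_k \coloneqq \{u \mid |h(u)| \leq \mu_k^{-1/2}\}\). It then selects the smallest \(q_k > q_{k-1}\) with \(w_{q_k} \in C_k\). You instead move along a segment from a relative-interior point. You hold \(t_k = 1/j\) fixed on the block \(k_j \leq k < k_{j+1}\), where \(k_j\) is chosen so that \(\mu_k |h(z_{1/j})| \leq 1/j\) for all \(k \geq k_j\).

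Your block construction is the more robust of the two. Since \(h\) is lsc with \(h(x) = +\infty\), \(w_q \in C_k\) holds for only finitely many \(q\). Consequently, the paper's strictly increasing selection need not exist when \(\mu_k\) decreases slowly. For example, take \(h(u) = 1/u\) on \(u > 0\) (and \(+\infty\) otherwise), \(x = 0\), \(w_q = 1/(q+1)\) and \(\mu_k = 1/(k+1)\). Then \(q_0 = 0\), but no admissible \(q_1\) exists. Letting the recovery point repeat, as you do, always works. As you note, it also does not use monotonicity of \(\{\mu_k\}\).

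Two simplifications are available. First, the line segment principle is not needed: your block argument uses only finiteness of \(h\) at each chosen point, so it works verbatim with any sequence in \(\dom h\) converging to \(x\). Second, your ``second fact'' (upper bounds by linear interpolation along segments) is never used. Along a segment ending at \(x \notin \dom h\) it would give only the trivial bound \(+\infty\).
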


\begin{proof}
	By \citep[Proposition~\(7.2\)]{rockafellar1998variational}, we must show that, for all \(x \in \R^n\),
	\begin{enumerate}
		\item%
		      \label{itm:elim-liminf}%
		      \(\liminf_{k \to \infty} \mu_k h(x_k) \geq \chi(x \mid \widebar{\dom h})\) for every sequence \(\{x_k\} \to x\), and
		\item%
		      \label{itm:elim-limsup}%
		      \(\limsup_{k \to \infty} \mu_k h(x_k) \leq \chi(x \mid \widebar{\dom h})\) for at least one sequence \(\{x_k\} \to x\).
	\end{enumerate}

	Let \(x \in \R^n\) and consider an arbitrary sequence \(\{x_k\} \to x\).
	Two situations can occur.
	Consider first the case where \(x \not \in \widebar{\dom h}\).
	For all sufficiently large \(k\), it must be that \(x_k \not \in \dom h\), and thus \(h(x_k) = \mu_k h(x_k) = +\infty\).
	Hence, \(\liminf_{k \to \infty} \mu_k h(x_k) = +\infty = \chi(x \mid \widebar{\dom h})\).

	Consider now the case where \(x \in \widebar{\dom h}\).
	Because \(h\) is proper and convex, it is bounded below by an affine function, i.e., there exist \(a \in \R^n\) and \(b \in \R\) such that \(h(x) \geq a^\top x + b\) for all \(x \in \R^n\).
	Thus, \(\mu_k h(x) \geq \mu_k a^\top x + \mu_k b\) for all \(k\), and \(\liminf_{k \to \infty} \mu_k h(x_k) \geq 0 = \chi(x \mid \widebar{\dom h})\).
	We have established condition~\ref{itm:elim-liminf}.

	We now establish condition~\ref{itm:elim-limsup}.
	Let \(x \in \R^n\).
	Three situations can occur.
	Consider first the case where \(x \not \in \widebar{\dom h}\), and let \(x_k \coloneqq x\) for all \(k\).
	Then, \(\limsup_{k \to \infty} \mu_k h(x_k) = +\infty = \chi(x \mid \widebar{\dom h})\).

	Consider next the case where \(x \in \dom h\), and let \(x_k \coloneqq x\) for all \(k\).
	Then, \(\limsup_{k \to \infty} \mu_k h(x_k) = 0 = \chi(x \mid \widebar{\dom h})\).

	Finally, consider the case where \(x \in \widebar{\dom h} \setminus \dom h\).
	There exists a sequence \(\{w_k\} \subseteq \dom h\) such that \(\{w_k\} \to x\).
	Let \(C_k \coloneqq \{x \in \R^n \mid |h(x)| \leq 1 / \sqrt{\mu_k}\} \subseteq \dom h\) for all \(k\).
	We have \(\dom h = \cup_k C_k\).
	Let \(q_0 \geq 0\) be the smallest integer such that \(w_{q_0} \in C_0\).
	Because \(\{\mu_k\}\) is nonincreasing, for each \(k \geq 1\), there exists a smallest integer \(q_k > q_{k-1}\) such that \(w_{q_k} \in C_k\).
	By construction, \(\{q_k\}\) is increasing, and therefore, \(\{w_{q_k}\} \to x\).
	Let \(x_k \coloneqq w_{q_k}\) for all \(k\).
	We obtain \(\mu_k |h(x_k)| \leq \sqrt{\mu_k} \to 0\), and hence, \(\limsup_k \mu_k h(x_k) = \lim_k \mu_k h(x_k) = 0 = \chi(x \mid \widebar{\dom h})\).
\end{proof}

The next result shows that epi-convergence is preserved under infimal convolution with the half squared Euclidean norm.

\begin{lemma}%
	\label{lem:inf-convolution}
	Let \(h_k\) and \(h: \R^n \to \R \cup \{+\infty\}\) be proper, lsc and convex, and let \(g \coloneqq \tfrac{1}{2}\|\cdot\|_2^2\).
	If \(\elim h_k = h\), then \(\elim (h_k \square g) = h \square g\).
\end{lemma}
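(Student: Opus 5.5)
The plan is to argue through Fenchel conjugates and Wijsman's theorem rather than verify the two epi-limit inequalities directly. By \citep[Theorem~11.34]{rockafellar1998variational}, for proper lsc convex functions, \(\elim h_k = h\) if and only if \(\elim h_k^* = h^*\). Since \(g\) is continuous and finite, \(h_k \square g\) and \(h \square g\) are proper, lsc and convex: they are Moreau envelopes with parameter \(1\), which are finite and \(C^1\) by \Cref{prop:prox-moreau}. So it suffices to prove \(\elim (h_k \square g)^* = (h \square g)^*\) and then apply Wijsman's theorem once more in the reverse direction.

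First, I will use the conjugate identity for infimal convolution, \((h_k \square g)^* = h_k^* + g^* = h_k^* + g\), since \(g^* = g\) as recalled in the background. This holds without qualification for proper functions. Likewise, \((h \square g)^* = h^* + g\). Second, I will show that epi-convergence is preserved when a continuous finite function is added: if \(\elim \phi_k = \phi\) and \(g\) is continuous, then \(\elim(\phi_k + g) = \phi + g\). I will check this with \citep[Proposition~7.2]{rockafellar1998variational}, as in the proof of \Cref{lem:epi-muh}.
\begin{itemize}
\item For the liminf inequality, take any \(x_k \to x\). Then \(\liminf (\phi_k(x_k) + g(x_k)) \geq \liminf \phi_k(x_k) + g(x) \geq \phi(x) + g(x)\), because \(g(x_k)\to g(x)\).
\item For the limsup inequality, use the recovery sequence for \(\phi_k\) at \(x\). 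The continuity of \(g\) again passes to the limit.
\end{itemize}
Applying this with \(\phi_k = h_k^*\) gives \(\elim(h_k^* + g) = h^* + g\). Wijsman's theorem then yields \(\elim (h_k\square g) = h\square g\).

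The main obstacle is making sure the hypotheses of Wijsman's theorem hold at each application. All of \(h_k\), \(h\), \(h_k^*\), \(h^*\), and their sums with \(g\) must be proper, lsc and convex. For the conjugates of proper lsc convex functions this is standard. For \(h_k^*+g\), properness follows because \(\dom h_k^* \neq \varnothing\), and the sum of lsc convex functions is lsc and convex.

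If I prefer to avoid conjugates, I would use a direct route as a fallback.
\begin{itemize}
\item For the liminf part, use the attained minimizer \(v_k = \prox{h_k}(x_k)\). Show \(\{v_k\}\) is bounded, using a uniform affine minorant of the \(h_k\) obtained from epi-convergence. Then pass to the limit using the liminf property of \(h_k\).
\item For the limsup part, given \(x\), let \(v = \prox{h}(x)\). Take a recovery sequence \(v_k \to v\) with \(h_k(v_k)\to h(v)\), and set the points \(x_k := x\).
\end{itemize}
The uniform boundedness in the liminf step is the delicate point of this route, which is why I would favour the conjugate argument.
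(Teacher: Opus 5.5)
Your proposal is correct and follows essentially the same route as the paper. Both pass to conjugates via Wijsman's theorem (Theorem~11.34), use \((h_k \square g)^* = h_k^* + g\) since \(g^* = g\), invoke stability of epi-convergence under addition of the continuous finite function \(g\), and then apply Wijsman's theorem in reverse. The differences are minor: you verify the sum rule directly from the liminf/limsup characterization instead of citing Exercise~7.8(a) of Rockafellar--Wets, and you explicitly check that \(h_k \square g\) and \(h \square g\) are proper, lsc and convex (as Moreau envelopes), which the paper leaves implicit; the fallback direct argument is therefore unnecessary.
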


\begin{proof}
	Under our assumptions, \citep[Theorem~\(11.34\)]{rockafellar1998variational} implies that \(\elim h_k^* = h^*\).
	Because \(g = g^*\), \citep[Exercise~\(7.8a\)]{rockafellar1998variational} yields \(\elim h_k^* + g^* = h^* + g^*\).
	Note that \(h_k^*\), \(h^*\) and \(g^*\) are proper, lsc and convex as well.
	The sum of Fenchel conjugates is related to the inf-convolution via \(h^* + g^* = (h \square g)^*\) \citep[Theorem~\(11.23a\)]{rockafellar1998variational}, and hence \(\elim (h_k \square g)^* = (h \square g)^*\).
	Thus, \citep[Theorem~\(11.34\)]{rockafellar1998variational} again implies \(\elim (h_k \square g) = h \square g\).
\end{proof}

We are now in position to establish the main result that links~\eqref{eq:feasibility-problem} and~\eqref{eq:proxy-feasibility-problem}.

\begin{theorem}%
	\label{thm:elim-moreau-envelope}
	For any positive nonincreasing sequence \(\{\mu_k\} \to 0\),
	\[
		\elim_{k \to \infty} \mu_k h_{\mu_k} = \tfrac{1}{2} \dist(\cdot \mid \widebar{\dom h})^2.
	\]
\end{theorem}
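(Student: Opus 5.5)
The plan is to rewrite \(\mu_k h_{\mu_k}\) as an infimal convolution of \(\mu_k h\) with \(g \coloneqq \tfrac{1}{2}\|\cdot\|_2^2\), and then combine \Cref{lem:epi-muh} with \Cref{lem:inf-convolution}.

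First I would check the rescaling identity. By~\eqref{eq:def-moreau-envelope}, for every \(u\),
\[
	\mu h_\mu(u) = \inf_{v} \left\{ \tfrac{1}{2}\|v - u\|_2^2 + \mu h(v) \right\} = \big((\mu h) \square g\big)(u).
\]
This is immediate from multiplying the infimand by \(\mu > 0\). It also holds where \(h = +\infty\), since \(\mu \cdot (+\infty) = +\infty\).

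Next I would verify the hypotheses of \Cref{lem:inf-convolution} with \(h_k \coloneqq \mu_k h\) and limit function \(\chi(\cdot \mid \widebar{\dom h})\).
\begin{itemize}
	\item Each \(\mu_k h\) is proper, lsc and convex, because positive scaling preserves all three properties.
	\item The limit \(\chi(\cdot \mid \widebar{\dom h})\) is proper, lsc and convex. Indeed, \(\dom h\) is convex and nonempty, so its closure is nonempty, closed and convex, and the Background section notes that the indicator of such a set is lsc and convex.
	\item \Cref{lem:epi-muh} gives \(\elim_k \mu_k h = \chi(\cdot \mid \widebar{\dom h})\), using that \(\{\mu_k\}\) is positive, nonincreasing and tends to \(0\).
\end{itemize}
\Cref{lem:inf-convolution} then yields
\[
	\elim_k \mu_k h_{\mu_k} = \chi(\cdot \mid \widebar{\dom h}) \square g.
\]

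Finally I would identify this limit. For a nonempty closed set \(D\),
\[
	\big(\chi(\cdot \mid D) \square g\big)(x) = \inf_{y \in D} \tfrac{1}{2}\|x - y\|_2^2 = \tfrac{1}{2}\dist(x \mid D)^2,
\]
where the infimum is attained because \(D\) is closed. Taking \(D = \widebar{\dom h}\) gives the claim.

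I do not expect a substantive obstacle, since the heavy lifting is done in the two lemmas. The only point needing care is the first step: the scaling identity must hold exactly, including outside \(\dom h\), and the convention \(\mu \cdot (+\infty) = +\infty\) ensures this.
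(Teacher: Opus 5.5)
Your proposal is correct and follows the paper's proof: you write \(\mu_k h_{\mu_k} = (\mu_k h) \square g\) with \(g = \tfrac{1}{2}\|\cdot\|_2^2\), combine \Cref{lem:epi-muh} with \Cref{lem:inf-convolution}, and identify \(\chi(\cdot \mid \widebar{\dom h}) \square g\) with \(\tfrac{1}{2}\dist(\cdot \mid \widebar{\dom h})^2\). You also check explicitly that \(\mu_k h\) and the limiting indicator are proper, lsc and convex, so that \Cref{lem:inf-convolution} applies; the paper leaves this implicit.
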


\begin{proof}
	Apply \Cref{lem:epi-muh} and \Cref{lem:inf-convolution} to \(h_k \coloneqq \mu_k h\) to obtain
	\[
		\elim_{k \to \infty} ((\mu_k h) \square g) =
		\chi(\cdot \mid \widebar{\dom h}) \square g =
		\tfrac{1}{2} \dist(\cdot \mid \widebar{\dom h})^2.
	\]
	The result follows from the observation that \((\mu_k h) \square g = \mu_k (h \square (\mu_k^{-1} g)) = \mu_k h_{\mu_k}\).
\end{proof}

The first-order optimality conditions for~\eqref{eq:proxy-feasibility-problem} are that there exist \(w \in C\) and \(z \in N_C(w)\) such that
\begin{equation}%
	\label{eq:KKT_proxy_feasibility}
	\nabla F(x) (F(x) - \prox{\mu h}(F(x))) + \nabla c(x) z = 0, \quad c(x) - w = 0.
\end{equation}
Then, in analogy with \eqref{eq:approxKKT}, it seems reasonable to declare local infeasibility in \Cref{alg:envelopt} if \(\{\mu_k\} \to 0\) and
\begin{subequations}%
	\label{eq:proxy-infeasibility-kkt}
	\begin{align}%
		\|\nabla F(x_k) (F(x_k) - u_k) + \nabla c(x_k) z\| \leq \epsilon^{\inf}
		,\quad
		\|c(x_k) - w\|                                     \leq{}& \epsilon^{\inf}
		\label{eq:proxy-infeasibility-kkt-1} \\
		\text{and}\quad
		\|F(x_k) - u_k\|                                   >{}& \epsilon,
		\label{eq:proxy-infeasibility-kkt-2}
	\end{align}%
\end{subequations}
for some \(w \in C\), \(z \in N_C(w)\), and prescribed tolerances \(\epsilon, \epsilon^{\inf} > 0\).
Criterion~\eqref{eq:proxy-infeasibility-kkt} is easily computable along the iterations and provides a well-defined termination condition, as we now demonstrate.

By~\eqref{eq:approxKKT_subproblem}, each iteration of \Cref{alg:envelopt} produces iterates \(x_k\), \(u_k = u(x_k; \hat{y}_k, \mu_k)\), \(y_k = y(x_k; \hat{y}_k, \mu_k) \in \partial h(u_k)\), \(w_k \in C\), and \(z_k \in N_C(w_k)\) such that
\[
	\|\nabla f(x_k) + \nabla F(x_k) y_k + \nabla c(x_k) z_k\| \leq \epsilon_k, \quad \|c(x_k) - w_k\| \leq \epsilon_k.
\]
The first condition can be written
\begin{equation}%
	\label{eq:proxy-infeasibility-mu}
	\|\mu_k \nabla f(x_k) + \nabla F(x_k) (\mu_k \hat{y}_k + F(x_k) - u_k) + \nabla c(x_k) (\mu_k z_k)\| \leq \mu_k \epsilon_k.
\end{equation}
Assume there exists an index set \(K \subseteq N\) such that \(\{\mu_k\}_K \to 0\).
If \(\nabla f(x_k)\) remains bounded, \(\{\mu_k \nabla f(x_k)\}_K \to 0\).
By \Cref{asm:envelopt-multipliers}, \(\{\mu_k \hat{y}_k\}_K \to 0\).
Because \(N_C(w_k)\) is a cone, \(\mu_k z_k \in N_C(w_k)\).
Therefore,~\eqref{eq:proxy-infeasibility-kkt} will be satisfied for sufficiently large \(k \in K\) if~\eqref{eq:constrained-convex-composite} is infeasible.

\subsection{Complexity}%
\label{sec:complexity}

Let \(\epsilon > 0\).
We first wish to bound the maximum number of iterations for \Cref{alg:envelopt} to return an \(\epsilon\)-stationary point of~\eqref{eq:constrained-convex-composite} in the event that \(\{\mu_k\}\) remains bounded away from zero.
We require the following assumption.

\begin{assumption}%
	\label{asm:complexity}
	A concrete implementation of \Cref{alg:envelopt} is such that there exists a positive sequence \(\{\eta_k\}\) and constants \(0 < \kappa_\mu < 1\), \(0 < \kappa_\eta < 1\) and \(0 < \kappa_\epsilon < 1\) satisfying the following conditions for all \(k\):
	\begin{enumerate}[label=(\roman{*})]
		\item \(\|F(x_k) - u_k\| > \eta_k\) \(\Longrightarrow\) \(\mu_{k+1} \leq \kappa_\mu \mu_k\);
		\item \(\|F(x_k) - u_k\| \leq \eta_k\) \(\Longrightarrow\) \(\mu_{k+1} = \mu_k\), \(\eta_{k+1} \leq \kappa_\eta \eta_k\), and \(\epsilon_{k+1} \leq \kappa_\epsilon \epsilon_k\).
	\end{enumerate}
\end{assumption}

The following result is inspired from \citep[Theorem~\(3.1\)]{birgin2020complexity}.

\begin{theorem}%
	\label{thm:complexity-mu-bounded}
    Let \Cref{asm:complexity} be satisfied, and let \(\epsilon_0 \geq \epsilon\).
    Without loss of generality, assume that \(\bar{\eta} \geq \epsilon\).
    Assume that each iteration of \Cref{alg:envelopt} succeeds, and that
	\begin{enumerate}[label=(\roman{*})]
		\item there exists \(\bar{\mu} > 0\) such that \(\mu_k \geq \bar{\mu}\) for all \(k\), and
		\item there exists \(\bar{\eta} \geq \epsilon\) such that \(\|F(x_k) - u_k\| \leq \bar{\eta}\) for all \(k\).
	\end{enumerate}
	\Cref{alg:envelopt} returns an \(\epsilon\)-stationary point of~\eqref{eq:constrained-convex-composite} in at most
	\begin{equation}%
		\label{eq:complexity-mu-bounded}
		\left\lceil \frac{ \log(\bar{\mu} / \mu_0) }{ \log(\kappa_\mu) } \right\rceil \, \max \left( \left\lceil \frac{ \log(\epsilon / \epsilon_0) }{ \log(\kappa_\epsilon) } \right\rceil, \, \left\lceil \frac{ \log(\epsilon / \bar{\eta}) }{ \log(\kappa_\eta) } \right\rceil \right)
	\end{equation}
	iterations.
\end{theorem}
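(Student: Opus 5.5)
We classify each iteration \(k\) as \emph{unsuccessful} if \(\|F(x_k) - u_k\| > \eta_k\) and \emph{successful} otherwise, and bound the number of each kind separately, following \citep[Theorem~\(3.1\)]{birgin2020complexity}. The termination test relies on \Cref{lem:eps-stationarity}: if at some iteration \(\epsilon_k \leq \epsilon\) and \(\|F(x_k) - u_k\| \leq \epsilon\), then \(x_k\) is \(\epsilon\)-stationary for~\eqref{eq:constrained-convex-composite} and \Cref{alg:envelopt} stops at step~\ref{stp:eps-kkt-check}. It therefore suffices to show that such an iteration occurs within the number of iterations in~\eqref{eq:complexity-mu-bounded}.

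\emph{Unsuccessful iterations.} By \Cref{asm:complexity}(i), each unsuccessful iteration multiplies \(\mu\) by at most \(\kappa_\mu\). Successful iterations leave \(\mu\) unchanged by \Cref{asm:complexity}(ii). Hence after \(j\) unsuccessful iterations we have \(\mu_k \leq \kappa_\mu^j \mu_0\). Hypothesis~(i) gives \(\mu_k \geq \bar{\mu}\), so \(j \leq \log(\bar{\mu}/\mu_0)/\log(\kappa_\mu)\), since both logarithms are nonpositive. The number of unsuccessful iterations is thus at most \(N_\mu \coloneqq \lceil \log(\bar{\mu}/\mu_0)/\log(\kappa_\mu) \rceil\). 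These iterations split the run into at most \(N_\mu\) maximal runs of consecutive successful iterations, possibly one more; I would absorb this boundary case, or else reinterpret the first factor as counting the ``phases'' between decreases.

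\emph{Successful iterations within a run.} Inside a run, \Cref{asm:complexity}(ii) gives \(\eta_{k+1} \leq \kappa_\eta \eta_k\) and \(\epsilon_{k+1} \leq \kappa_\epsilon \epsilon_k\). The difficulty is that the sequences \(\{\eta_k\}\) and \(\{\epsilon_k\}\) are only controlled on successful steps, so I need a starting bound at the beginning of each run.
\begin{itemize}
\item For \(\epsilon_k\), I assume, as the template permits, that unsuccessful steps do not increase \(\epsilon_k\), so that \(\epsilon_k \leq \epsilon_0\) throughout.
\item For the feasibility measure, I use hypothesis~(ii) to replace the unknown \(\eta\) at the start of a run: along successful iterations \(\|F(x_k)-u_k\| \leq \eta_k\), and I would bound this effectively by \(\min(\bar{\eta}, \eta_k)\). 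The decreasing \(\eta_k\) then drives the feasibility measure below \(\epsilon\) after \(\lceil \log(\epsilon/\bar{\eta})/\log(\kappa_\eta)\rceil\) successful steps in the same run.
\end{itemize}
Within \(\max(\cdot,\cdot)\) successful iterations of one run, both \(\epsilon_k \leq \epsilon\) and \(\|F(x_k)-u_k\| \leq \epsilon\) hold, and the algorithm terminates. Multiplying by the number of runs gives~\eqref{eq:complexity-mu-bounded}.

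\textbf{Main obstacle.} The hardest step is initializing \(\eta_k\) at the start of each run, relative to \(\bar{\eta}\). The assumption does not say how \(\eta\) is reset after an unsuccessful step. I would first try to argue that the relevant quantity is \(\min(\eta_k, \bar{\eta})\), using hypothesis~(ii). If that fails, I would add the mild implementation convention \(\eta_k \leq \bar{\eta}\), for example \(\eta_0 = \bar{\eta}\) with \(\eta\) never increased, which is what ``without loss of generality'' suggests. A possible alternative is to count the successful iterations globally rather than per run, which would even give an additive bound dominated by the product.
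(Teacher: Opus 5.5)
Your route is the paper's own. You bound the iterations with \(\|F(x_k)-u_k\| > \eta_k\) by \(N_\mu\) using \(\mu_k \geq \bar{\mu}\), bound the successful iterations between two penalty decreases by the maximum in~\eqref{eq:complexity-mu-bounded} using the geometric decrease of \(\epsilon_k\) and \(\eta_k\), and multiply.

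The obstacles you flag are genuine, and the paper does not resolve them either. Its sentence that primal feasibility ``returns to its largest value \(\bar{\eta}\) at each subproblem'' tacitly assumes that every phase restarts with \(\epsilon_k \leq \epsilon_0\) and \(\eta_k \leq \bar{\eta}\). \Cref{asm:complexity} says nothing about \(\epsilon_{k+1}\) or \(\eta_{k+1}\) after an unsuccessful iteration, so your conventions are necessary. Your first idea of working with \(\min(\bar{\eta}, \eta_k)\) cannot replace them. Hypothesis~(ii) only caps the feasibility measure at \(\bar{\eta} \geq \epsilon\), so in the worst case termination requires a successful iteration with \(\eta_k \leq \epsilon\). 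A phase that starts at \(\eta_s \gg \bar{\eta}\) keeps \(\min(\bar{\eta}, \eta_k) = \bar{\eta}\) for roughly \(\log(\bar{\eta}/\eta_s)/\log(\kappa_\eta)\) iterations, which is unbounded in \(\eta_s\). The legitimate ``without loss of generality'' is as follows. Assume the values of \(\eta_k\) and \(\epsilon_k\) at the start of each phase are bounded, as in \Cref{alg:envelopt-bcl}, where the reset values never exceed \(\eta_1\) and \(\epsilon_1\). Then enlarge \(\bar{\eta}\), which keeps hypothesis~(ii) valid, and replace \(\epsilon_0\) in~\eqref{eq:complexity-mu-bounded} by an upper bound on all phase-start tolerances.

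The boundary case, by contrast, cannot be absorbed into the displayed product. Your final multiplication, like the paper's, also omits the unsuccessful iterations themselves. If every iteration is successful and \(\bar{\mu} = \mu_0\), the first factor in~\eqref{eq:complexity-mu-bounded} is \(\lceil 0 \rceil = 0\), yet at least one iteration is performed. More generally, with \(r \coloneqq \log(\bar{\mu}/\mu_0)/\log(\kappa_\mu)\), there are at most \(\lfloor r \rfloor + 1\) phases, which exceeds \(N_\mu = \lceil r \rceil\) exactly when \(r\) is an integer.

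What your argument does prove, once the phase-start bounds are in place, is this. Let \(M\) be the maximum in~\eqref{eq:complexity-mu-bounded}. If the first \(M\) iterations of a phase are successful without terminating, then the next iteration has \(\epsilon_k \leq \epsilon\) and \(\eta_k \leq \epsilon\). By \Cref{lem:eps-stationarity} it therefore either terminates or is unsuccessful; this is how your claim that the algorithm ``terminates'' within \(M\) successful iterations must be read. Each phase thus contains at most \(M+1\) iterations, and the total is at most \((N_\mu+1)(M+1)\). This is still \(O(\log(1/\epsilon))\), which is the only consequence the paper draws.

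Your alternative of counting successful iterations globally, giving roughly \(N_\mu + M + 1\), requires that \(\epsilon_k\) and \(\eta_k\) never increase. \Cref{alg:envelopt-bcl} does not guarantee this, so the per-phase count is the robust one. With the phase-start bounds stated as hypotheses and the count corrected in this way, your argument is complete and is the paper's argument made rigorous.
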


\begin{proof}
    Because \(\mu_k\) never increases and is bounded away from zero, \Cref{asm:complexity} ensures that it eventually remains constant.
    When that occurs, \(\eta_k\) continues to decrease, so that \(F(x_k) - u_k \to 0\).
    In addition, there are at most \(\lceil \log(\bar{\mu} / \mu_0) / \log(\kappa_\mu) \rceil\) iterations \(k\) such that \(\|F(x_k) - u_k\| > \eta_k\).

	On iterations for which \(\|F(x_k) - u_k\| \leq \eta_k\), \(\epsilon_k\) decreases by a factor of at least \(\kappa_\epsilon\), and \(\eta_k\) decreases by a factor of at least \(\kappa_\eta\).
	Thus, there are at most \(\lceil \log(\epsilon / \epsilon_0) / \log(\kappa_\epsilon) \rceil\) such iterations until \(\epsilon_k \leq \epsilon\), and at most \(\lceil \log(\epsilon / \bar{\eta}) / \log(\kappa_\eta) \rceil\) such iterations until \(\eta_k \leq \epsilon\).

	In the worst case, primal feasibility \(\|F(x_k) - u_k\|\) returns to its largest value \(\bar{\eta}\) at each subproblem.
	Thus, the total number of iterations is at most~\eqref{eq:complexity-mu-bounded}.
\end{proof}

In \Cref{thm:complexity-mu-bounded}, we assumed without loss of generality that \(\bar{\eta} \geq \epsilon\).
If it were not the case, approximate primal feasibility would be satisfied throughout the iterations and the second term in the maximum in~\eqref{eq:complexity-mu-bounded} would simply not be present.
In all cases, the maximum number of iterations is \(O(\log(\epsilon))\).

\medskip

In general, though, \Cref{alg:envelopt} may stop at an iterate $x_k$ that appears to be infeasible and, at the same time, a local minimizer of the infeasibility problem \eqref{eq:proxy-feasibility-problem}.
The possibility that $\mu_k \to 0$ must be considered, since it necessarily takes place, for example, when \eqref{eq:constrained-convex-composite} is infeasible.
The following theorem establishes an upper bound on the number of iterations before Envelopt finds an approximate stationary point (in the sense of \eqref{eq:approxKKT}) or an infeasible point that is approximately stationary for the infeasibility problem \eqref{eq:proxy-feasibility-problem}, according to \eqref{eq:proxy-infeasibility-kkt}.

Let $\omega(k)\in\N$ denote the number of penalty updates up to the $k$-th iteration.
The update rules in \Cref{asm:complexity} give $\mu_k \leq \kappa_\mu^{\omega(k)}\mu_0$ and $\omega(k) \leq \ln(\mu_k/\mu_0)/\ln \kappa_\mu$.

\begin{theorem} \label{thm:complexity-mu-zero}
	Let $\epsilon > 0$ and $\epsilon^{\inf} \in (0,\epsilon]$ be given.
	Let \Cref{asm:complexity} be satisfied, and let \(\epsilon_0 \geq \epsilon\).
	Assume that
	\begin{enumerate}[label=(\roman{*})]
		\item \label{asm:complexity-mu-zero:etabar}%
		there exists \(\bar{\eta} \geq \epsilon\) such that \(\|F(x_k) - u_k\| \leq \bar{\eta}\) for all \(k\),
		\item \label{asm:complexity-mu-zero:c}%
		there exist \(c_f,c_F\in\R\) such that $\| \nabla f(x_k) \| \leq c_f$ and $\| \nabla F(x_k) \| \leq c_F$ for all $k$,
		\item \label{asm:complexity-mu-zero:Neps}%
		there exists \(N(\epsilon^{\inf}, \epsilon)\in\N\) such that \(\epsilon_k \leq \min\{\epsilon, \epsilon^{\inf}/4 \}\) for all \(k\geq N(\epsilon^{\inf}, \epsilon)\),
		\item \label{asm:complexity-mu-zero:muyhat}%
		there exist $\varrho_0>0$ and $\kappa_\varrho\in(0,1)$ such that \(\mu_k \|\hat{y}_k\| \leq \varrho_0 \kappa_\varrho^{\omega(k)}\) for all $k$.
	\end{enumerate}
	Then, \Cref{alg:envelopt} returns either an \(\epsilon\)-stationary point of~\eqref{eq:constrained-convex-composite}, according to \eqref{eq:approxKKT},
	or an \(\epsilon\)-infeasible $\epsilon^{\inf}$-stationary point of \eqref{eq:proxy-feasibility-problem}, according to \eqref{eq:proxy-infeasibility-kkt}, in at most
	\[
	\max\{ N(\epsilon^{\inf}, \epsilon), P(\epsilon) U(\epsilon^{\inf}) \}
	\]
	iterations, where
	$P(\epsilon)
	\coloneqq
	\ceil{\frac{\ln(\epsilon/\bar{\eta})}{\ln(\kappa_\eta)}}$
	and
	\[
		U(\epsilon^{\inf})
		\coloneqq
		\ceil{
		\max\left\{
			1,
			\frac{\ln(\epsilon^{\inf}/(4c_F\varrho_0))}{\ln\kappa_\varrho},
			\frac{\ln(\min\{ 1, \epsilon^{\inf}/(4c_f) \}/\mu_0)}{\ln\kappa_\mu}
		\right\}
		}.
	\]
\end{theorem}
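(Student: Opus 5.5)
The plan is to count iterations, after index \(N(\epsilon^{\inf}, \epsilon)\), by splitting them into two kinds according to \Cref{asm:complexity}.
\emph{Successful} iterations are those with \(\|F(x_k) - u_k\| \leq \eta_k\); there \(\mu\) is kept and \(\eta\) shrinks.
\emph{Penalty updates} are those with \(\|F(x_k) - u_k\| > \eta_k\); there \(\mu_{k+1} \leq \kappa_\mu \mu_k\).
I will show two facts. Within any run of consecutive successful iterations, at most \(P(\epsilon)\) of them can occur before \(\eta_k \leq \epsilon\). Once \(\eta_k \leq \epsilon\) and \(\epsilon_k \leq \epsilon\), a successful iteration satisfies \Cref{lem:eps-stationarity} and the algorithm returns an \(\epsilon\)-stationary point. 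Starting from \(\bar{\eta}\), as in the proof of \Cref{thm:complexity-mu-bounded} (worst case: the primal residual returns to \(\bar{\eta}\) after each penalty update), each such run has length at most \(P(\epsilon)\). Hence every block of \(P(\epsilon)\) iterations contains either a termination or a penalty update, so \(\omega(k) \gtrsim k / P(\epsilon)\).

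Next I show that after \(U(\epsilon^{\inf})\) penalty updates, any iterate \(x_k\) with \(k \geq N(\epsilon^{\inf}, \epsilon)\) and \(\|F(x_k) - u_k\| > \epsilon\) satisfies \eqref{eq:proxy-infeasibility-kkt}. I start from \eqref{eq:proxy-infeasibility-mu} and set \(z \coloneqq \mu_k z_k \in N_C(w_k)\), using that the normal cone is a cone, and \(w \coloneqq w_k\). The triangle inequality gives
\[
\|\nabla F(x_k)(F(x_k)-u_k) + \nabla c(x_k) z\| \leq \mu_k \epsilon_k + \mu_k \|\nabla f(x_k)\| + \|\nabla F(x_k)\|\, \mu_k\|\hat{y}_k\|
\leq \mu_k \epsilon_k + \mu_k c_f + c_F \varrho_0 \kappa_\varrho^{\omega(k)}.
\]
With \(\omega(k) \geq U(\epsilon^{\inf})\), the bound \(\mu_k \leq \kappa_\mu^{\omega(k)} \mu_0\) gives \(\mu_k \leq \min\{1, \epsilon^{\inf}/(4c_f)\}\). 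Then:
\begin{itemize}
\item \(\mu_k c_f \leq \epsilon^{\inf}/4\);
\item \(\mu_k \epsilon_k \leq \epsilon_k \leq \epsilon^{\inf}/4\), by item~\ref{asm:complexity-mu-zero:Neps};
\item \(c_F \varrho_0 \kappa_\varrho^{\omega(k)} \leq \epsilon^{\inf}/4\), by the second term in \(U\).
\end{itemize}
The total is at most \(3\epsilon^{\inf}/4 \leq \epsilon^{\inf}\). The feasibility part \(\|c(x_k) - w_k\| \leq \epsilon_k \leq \epsilon^{\inf}\) holds as well. So either \(\|F(x_k) - u_k\| \leq \epsilon\) or \eqref{eq:proxy-infeasibility-kkt} holds.

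To combine the two, consider \(k \geq \max\{N, P(\epsilon) U(\epsilon^{\inf})\}\) and suppose the algorithm has not stopped. By the block argument, at least \(U\) penalty updates have occurred, and \(\epsilon_k \leq \epsilon\). Then either \(\|F(x_k) - u_k\| \leq \epsilon\), which gives \(\epsilon\)-stationarity via \Cref{lem:eps-stationarity}, or the infeasibility criterion holds. Either way the algorithm returns.

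The main obstacle is making the block-counting rigorous. The pattern must show that \(P(\epsilon) U\) iterations force either \(U\) penalty updates or an earlier return, while iterations before \(N\) count toward updates even though their \(\epsilon_k\) is not yet small. I will handle this with a pigeonhole argument on maximal runs of successful iterations, mirroring \Cref{thm:complexity-mu-bounded}. Because \(\epsilon_k\) is decreased only on successful steps, any inconsistency is absorbed by taking the maximum with \(N\).
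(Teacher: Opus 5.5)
\paragraph{Verdict.} Your per-iterate estimate is correct, but the counting step has a genuine gap. The argument as written yields $N+PU$, not the $\max$ you aim for.

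\paragraph{Per-iterate estimate.} Your bound is a bit more direct than the paper's \Cref{lem:complexity-mu-zero-infeas}. You bound \eqref{eq:proxy-infeasibility-kkt-1} exactly as written, with $u_k$, by one triangle inequality applied to \eqref{eq:proxy-infeasibility-mu}, and obtain $3\epsilon^{\inf}/4$. The paper instead compares $\nabla\widehat{P}_k(x_k)$ with the true proxy gradient $\nabla P_k(x_k)=\nabla F(x_k)\bigl(F(x_k)-\prox{\mu_k h}(F(x_k))\bigr)$ using nonexpansiveness of the prox, which is where its factor $2c_F\mu_k\|\hat y_k\|$ comes from. In exchange, the paper certifies approximate stationarity for \eqref{eq:proxy-feasibility-problem} itself, not only for the computable criterion.

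\paragraph{The gap in the counting.} Your block argument says every $P(\epsilon)$ consecutive iterations contain a return or a penalty update. This relies on a successful iteration with $\eta_k\le\epsilon$ triggering termination, which requires $\epsilon_k\le\epsilon$, i.e., $k\ge N(\epsilon^{\inf},\epsilon)$.
\begin{itemize}
\item Before that index, $\epsilon_k$ may exceed $\epsilon$, so runs of successful iterations are not limited to $P(\epsilon)$.
\item \Cref{asm:complexity} does not control $\epsilon_{k+1}$ after a penalty update, so it may be reset upward. Nothing in the hypotheses bounds these early runs.
\item For instance, all iterations $k<N$ may be successful with tiny residuals but $\epsilon_k>\epsilon$, so none of them need terminate and $\omega(N)=0$. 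Only after $N$ do penalty updates begin, still up to $P(\epsilon)$ iterations apart.
\end{itemize}
So at $k=\max\{N,P(\epsilon)U(\epsilon^{\inf})\}$ you cannot conclude $\omega(k)\ge U(\epsilon^{\inf})$ when $N$ dominates. Your sentence ``any inconsistency is absorbed by taking the maximum with $N$'' is exactly where the argument breaks: a maximum cannot absorb this, a sum can.

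\paragraph{What the argument does give.} Counting blocks only from $N$ onward yields at most $N(\epsilon^{\inf},\epsilon)+P(\epsilon)U(\epsilon^{\inf})$ iterations. This is precisely what the paper's own proof concludes in its final line; neither argument delivers the $\max$ written in the statement.

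\paragraph{A shared implicit assumption.} Both your argument and the paper's tacitly assume that $\eta$ restarts at a value at most $\bar\eta$ after each penalty update. Hypothesis~(i) bounds the residual $\|F(x_k)-u_k\|$, not $\eta_k$.
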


Some comments on the assumptions are in order, particularly in relation to \cite[Theorem~\(3.5\)]{birgin2020complexity}.
Similarly to the boundedness requirement \ref{asm:complexity-mu-zero:etabar} on the constraint violation, the assumption \ref{asm:complexity-mu-zero:c} on the derivatives appears in \cite[Lemma 3.3]{birgin2020complexity} as a result of the boundedness of $\{x_k\}$ (which they assume directly).
In contrast, requirements \ref{asm:complexity-mu-zero:Neps} and \ref{asm:complexity-mu-zero:muyhat} are algorithmic: they pose restrictions on the sequences of hyperparameters selected by \Cref{alg:envelopt}, and not on the problem at hand.
While $N(\epsilon^{\inf}, \epsilon)$ in \ref{asm:complexity-mu-zero:Neps} is the same as in \cite[Thm~3.5]{birgin2020complexity}, the requirement \ref{asm:complexity-mu-zero:muyhat} is weaker than the (rigid) safeguarding condition adopted for the AL method in \cite{birgin2020complexity}.
Indeed, the BCL scheme of \citet{conn1991globally} is also covered; see \cite[Lemma 4.1]{conn1991globally}.

Before proving \cref{thm:complexity-mu-zero} we give an intermediate result concerning stationarity for the infeasibility problem \eqref{eq:proxy-feasibility-problem}.

\begin{lemma}\label{lem:complexity-mu-zero-infeas}
	Let $\epsilon > 0$ and $\epsilon^{\inf} \in (0,\epsilon]$ be given.
	Let \Cref{asm:complexity} be satisfied.
	Suppose \ref{asm:complexity-mu-zero:c}--\ref{asm:complexity-mu-zero:muyhat} from \cref{thm:complexity-mu-zero} are valid for the iterates generated by \cref{alg:envelopt}.
	Then, $k\geq N(\epsilon^{\inf},\epsilon)$ and $\omega(k) \geq U(\epsilon^{\inf})$ together imply \eqref{eq:proxy-infeasibility-kkt-1}.
\end{lemma}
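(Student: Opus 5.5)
The plan is to start from the subproblem $\epsilon_k$-stationarity written in the scaled form~\eqref{eq:proxy-infeasibility-mu}. Set $w \coloneqq w_k \in C$ and $z \coloneqq \mu_k z_k$, which lies in $N_C(w_k)$ because normal cones are cones. Condition~\eqref{eq:approxKKT_subproblem} then gives $\|c(x_k) - w\| \leq \epsilon_k$. Since $k \geq N(\epsilon^{\inf},\epsilon)$, assumption~\ref{asm:complexity-mu-zero:Neps} gives $\epsilon_k \leq \epsilon^{\inf}/4 \leq \epsilon^{\inf}$, so the second inequality of~\eqref{eq:proxy-infeasibility-kkt-1} holds immediately.

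For the first inequality, I would use the triangle inequality on~\eqref{eq:proxy-infeasibility-mu}:
\[
\|\nabla F(x_k)(F(x_k)-u_k) + \nabla c(x_k) z\| \leq \mu_k\epsilon_k + \mu_k\|\nabla f(x_k)\| + \|\nabla F(x_k)\|\,\mu_k\|\hat{y}_k\|.
\]
Each of the three terms should then be bounded by $\epsilon^{\inf}/4$, which gives a total of at most $3\epsilon^{\inf}/4 \leq \epsilon^{\inf}$.

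\begin{itemize}
\item For the first term, I would use $\mu_k \leq \kappa_\mu^{\omega(k)}\mu_0$ together with $\omega(k) \geq U(\epsilon^{\inf}) \geq \ln(\min\{1,\epsilon^{\inf}/(4c_f)\}/\mu_0)/\ln\kappa_\mu$. Because $\ln\kappa_\mu<0$, this flips into $\kappa_\mu^{\omega(k)} \leq \min\{1,\epsilon^{\inf}/(4c_f)\}/\mu_0$, and hence $\mu_k \leq \min\{1,\epsilon^{\inf}/(4c_f)\}$. In particular $\mu_k \leq 1$, so $\mu_k\epsilon_k \leq \epsilon_k \leq \epsilon^{\inf}/4$.
\item For the second term, the same bound on $\mu_k$ together with assumption~\ref{asm:complexity-mu-zero:c} gives $\mu_k\|\nabla f(x_k)\| \leq (\epsilon^{\inf}/(4c_f))\,c_f = \epsilon^{\inf}/4$.
\item For the third term, assumption~\ref{asm:complexity-mu-zero:muyhat} gives $\mu_k\|\hat{y}_k\| \leq \varrho_0\kappa_\varrho^{\omega(k)}$. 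With $\omega(k) \geq \ln(\epsilon^{\inf}/(4c_F\varrho_0))/\ln\kappa_\varrho$, this yields $\varrho_0\kappa_\varrho^{\omega(k)} \leq \epsilon^{\inf}/(4c_F)$, so $\|\nabla F(x_k)\|\,\mu_k\|\hat{y}_k\| \leq \epsilon^{\inf}/4$.
\end{itemize}

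The main obstacle is bookkeeping rather than substance. Dividing by the negative logarithms $\ln\kappa_\mu$ and $\ln\kappa_\varrho$ reverses the inequalities, and this must be handled correctly. The degenerate cases $c_f = 0$ or $c_F = 0$ also need care: there the corresponding term vanishes and the bound is trivial, with the ratio interpreted as $+\infty$ and the minimum with $1$ taking effect. I would also check that the factor $\min\{1,\cdot\}$ is exactly what is needed to control $\mu_k\epsilon_k$, as it is in the first item above.
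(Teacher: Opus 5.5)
Your proof is correct and establishes \eqref{eq:proxy-infeasibility-kkt-1} exactly as written, with the computed point $u_k$. The three-term triangle inequality, the sign flip caused by $\ln\kappa_\mu,\ln\kappa_\varrho<0$, and the role of $\min\{1,\cdot\}$ in controlling $\mu_k\epsilon_k$ are all handled properly.

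The paper organizes the argument differently. It introduces $P_k(x)\coloneqq\mu_k h_{\mu_k}(F(x))$ and $\widehat P_k(x)\coloneqq\mu_k h_{\mu_k}(F(x)+\mu_k\hat y_k)$. It reads \eqref{eq:proxy-infeasibility-mu} as $\|\nabla\widehat P_k(x_k)+\nabla c(x_k)(\mu_k z_k)\|\le\mu_k c_f+\mu_k\epsilon_k\le\epsilon^{\inf}/2$. It then uses nonexpansiveness of $\prox{\mu_k h}$ to obtain $\|\nabla\widehat P_k(x_k)-\nabla P_k(x_k)\|\le 2c_F\mu_k\|\hat y_k\|\le\epsilon^{\inf}/2$.

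The residual the paper finally controls is therefore $\nabla P_k(x_k)+\nabla c(x_k)(\mu_k z_k)$, where $\nabla P_k(x_k)=\nabla F(x_k)(F(x_k)-\prox{\mu_k h}(F(x_k)))$. This is the exact gradient of the proxy objective in \eqref{eq:proxy-feasibility-problem}, as in \eqref{eq:KKT_proxy_feasibility}. It is not the quantity built from $u_k=\prox{\mu_k h}(F(x_k)+\mu_k\hat y_k)$ that appears literally in \eqref{eq:proxy-infeasibility-kkt-1}.

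Each route buys something different:
\begin{itemize}
\item Your route is more elementary, since it uses no property of the prox, and it leaves $\epsilon^{\inf}/4$ of slack.
\item The paper's route certifies genuine approximate stationarity for \eqref{eq:proxy-feasibility-problem}. This matches the wording ``$\epsilon^{\inf}$-stationary point of \eqref{eq:proxy-feasibility-problem}'' in \Cref{thm:complexity-mu-zero}.
\end{itemize}

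Your slack is exactly enough to recover the paper's version. Add the term $\nabla F(x_k)(u_k-\prox{\mu_k h}(F(x_k)))$ to your residual. By nonexpansiveness its norm is at most $c_F\mu_k\|\hat y_k\|\le\epsilon^{\inf}/4$, which brings your total bound to exactly $\epsilon^{\inf}$.
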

\begin{proof}
	Fix $k\in\N$ and assume $k\geq N(\epsilon^{\inf},\epsilon)$ and $\omega(k) \geq U(\epsilon^{\inf})$ hold.
	Owing to the definition of $U(\epsilon^{\inf})$, the latter condition implies
	\begin{equation}\label{eq:complexity-thresholds}
		\varrho_0 \kappa_\varrho^{\omega(k)} \leq \epsilon^{\inf}/(4 c_F)
		\qquad\text{and}\qquad
		\mu_k \leq \min\{ 1, \epsilon^{\inf}/(4c_f) \} .
	\end{equation}
	For convenience, define the $C^1$ functions
	\begin{equation*}
		P_k(x)
		\coloneqq
		\mu_k h_{\mu_k}(F(x))
		\qquad\text{and}\qquad
		\widehat{P}_k(x)
		\coloneqq
		\mu_k h_{\mu_k}(F(x)+\mu_k\hat{y}_k)
	\end{equation*}
	to write more compactly the objective functions of \eqref{eq:proxy-feasibility-problem} and \eqref{eq:partially-eliminated-subproblem}, respectively.
	
	We proceed in three steps,
	(a) showing that $\|c(x_k)-w\| \leq \epsilon^{\inf}$ holds for some $w\in C$,
	(b) providing expressions for $\nabla P_k(x_k)$ and $\nabla \widehat{P}_k(x_k)$,
	and (c) establishing that $\| \nabla P_k(x_k) + \nabla c(x_k) z \| \leq \epsilon^{\inf}$ holds for some $z\in N_C(w)$.
	Together, these observations prove \eqref{eq:proxy-infeasibility-kkt-1}.
	
	(a)
	By \ref{asm:complexity-mu-zero:Neps}, $k\geq N(\epsilon^{\inf},\epsilon)$ implies $\epsilon_k \leq \epsilon^{\inf}$, hence subproblem \eqref{eq:partially-eliminated-subproblem} delivers $\|c(x_k)-w_k\| \leq \epsilon^{\inf}$ for some $w_k\in C$, by \eqref{eq:approxKKT_subproblem}.
	
	(b)
	Direct derivation of $P_k$ yields $\nabla P_k(x_k) = \nabla F(x_k) (F(x_k)-\prox{\mu_k h}(F(x_k)))$, using the differentiation rule for the Moreau envelope \eqref{eq:Moreau_envelope_deriv}.
	Similarly, using also the definition of $u_k = u(x_k;\hat{y}_k,\mu_k)$, it is $\nabla \widehat{P}_k(x_k) = \nabla F(x_k) (F(x_k) - u_k + \mu_k \hat{y}_k)$.
	
	(c)
	We find first a bound for $\| \nabla \widehat{P}_k(x_k) + \nabla c(x_k) z \|$, then for the difference $\| \nabla \widehat{P}_k(x_k) - \nabla P_k(x_k) \|$, and finally for $\| \nabla P_k(x_k) + \nabla c(x_k) z \|$ by combining the first two.
	It will be natural to consider the multiplier $z = \mu_k z_k \in N_C(w_k)$.
	From \eqref{eq:proxy-infeasibility-mu} and \eqref{eq:Moreau_envelope_deriv} we have that
	\begin{align*}
		\mu_k \epsilon_k
		\geq{}&
		\|\mu_k \nabla f(x_k) + \nabla F(x_k) (F(x_k) - u_k + \mu_k \hat{y}_k) + \nabla c(x_k) (\mu_k z_k) \|
		\\
		={}&
		\|\mu_k \nabla f(x_k) + \nabla \widehat{P}_k(x_k) + \nabla c(x_k) (\mu_k z_k) \|
	\end{align*}
	and therefore
	\begin{equation*}
		\begin{aligned}
			\| \nabla \widehat{P}_k(x_k) + \nabla c(x_k) (\mu_k z_k) \|
			\leq{}&
			\|\mu_k \nabla f(x_k) \| + \mu_k \epsilon_k
			\\
			\leq{}&
			\mu_k c_f + \mu_k \epsilon_k
			&&\text{[with $c_f$ from \ref{asm:complexity-mu-zero:c}]}
			\\
			\leq{}&
			\mu_k c_f + \mu_k \epsilon^{\inf}/4
			&&\text{[due to $k\geq N(\epsilon^{\inf},\epsilon)$]}
			\\
			\leq{}&
			\epsilon^{\inf}/2
			.
			&&\text{[by \eqref{eq:complexity-thresholds}]}
		\end{aligned}
	\end{equation*}
	Now we bound the difference between $\nabla\widehat{P}_k$ and $\nabla P_k$, obtaining
	\begin{equation*}
		\begin{aligned}
			\| \nabla{}& \widehat{P}_k(x_k) - \nabla P_k(x_k) \|
			\\
			={}&
			\left\|
			\nabla F(x_k) \left[ \prox{\mu_k h}(F(x_k)) + \mu_k\hat{y}_k - \prox{\mu_k h}(F(x_k)+\mu_k\hat{y}_k) \right]
			\right\|
			\\
			\leq{}&
			c_F \left\| \prox{\mu_k h}(F(x_k)) + \mu_k\hat{y}_k - \prox{\mu_k h}(F(x_k)+\mu_k\hat{y}_k) \right\|
			&&\text{[with $c_F$ from \ref{asm:complexity-mu-zero:c}]}
			\\
			\leq{}&
			2 c_F \mu_k \| \hat{y}_k \|
			\\
			\leq{}&
			2 c_F \varrho_0 \kappa_\varrho^{\omega(k)}
			\leq
			\epsilon^{\inf}/2,
			&&\text{[by \ref{asm:complexity-mu-zero:muyhat} and \eqref{eq:complexity-thresholds}]}
		\end{aligned}
	\end{equation*}
	where the second inequality follows from the nonexpansiveness of $\prox{\mu_k h}$ (unit Lipschitz constant by \cite[Thms 12.12, 12.17]{rockafellar1998variational}).
	Therefore, combining the bounds above we obtain
	\begin{equation*}
		\begin{aligned}
			\| \nabla P_k(x_k) + \nabla c(x_k) (\mu_k z_k) \|
			={}&
			\| \nabla P_k(x_k) - \nabla \widehat{P}_k(x_k) + \nabla \widehat{P}_k(x_k) + \nabla c(x_k) (\mu_k z_k) \|
			\\
			\leq{}&
			\| \nabla P_k(x_k) - \nabla \widehat{P}_k(x_k) \| + \| \nabla \widehat{P}_k(x_k) + \nabla c(x_k) (\mu_k z_k) \|
			\\
			\leq{}&
			\epsilon^{\inf}
			,
		\end{aligned}
	\end{equation*}
	concluding the proof.
\end{proof}

The main complexity result can now be established.

\begin{proof}[Proof of \cref{thm:complexity-mu-zero}]
	We proceed in two steps, always considering iterations $k \geq N(\epsilon^{\inf},\epsilon)$.
	First, we show that if \eqref{eq:approxKKT} is never satisfied, then \eqref{eq:proxy-infeasibility-kkt-2} holds and the number of iterations between two consecutive penalty updates is bounded above by $P(\epsilon)$.
	Second, we invoke \cref{lem:complexity-mu-zero-infeas}: if the number of penalty updates exceeds $U(\epsilon^{\inf})$, then \eqref{eq:proxy-infeasibility-kkt-1} holds.
	
	\emph{First step:}
	After the first $N(\epsilon^{\inf},\epsilon)$ iterations, $\epsilon_k\leq\epsilon$ by \ref{asm:complexity-mu-zero:Neps} guarantees that
	\[
	\| \nabla f(x_k) + \nabla F(x_k) y_k + \nabla c(x_k) z_k \| \leq \epsilon
	\quad\text{and}\quad
	\| c(x_k) - w_k \| \leq \epsilon
	\]
	are satisfied for some suitable $w_k\in C$ and $z_k\in N_C(w_k)$.
	Therefore, \eqref{eq:proxy-infeasibility-kkt-2} must hold if \eqref{eq:approxKKT} fails.

	Because $\|F(x_k)-u_k\| \leq \bar{\eta}$ for all $k$ by assumption \ref{asm:complexity-mu-zero:etabar}, the conditions in \Cref{asm:complexity} indicate that it takes at most $P(\epsilon) \coloneqq \ceil{\ln(\epsilon/\bar{\eta}) / \ln(\kappa_\eta)}$ consecutive iterations without penalty updates to reach $\|F(x_k)-u_k\| \leq \epsilon$.
	This means that, for $k\geq N(\epsilon^{\inf}, \epsilon)$, penalty updates are at most $P(\epsilon)$ iterations apart as long as \eqref{eq:approxKKT} fails.	
	
	\emph{Second step:}
	We now invoke \cref{lem:complexity-mu-zero-infeas}, which guarantees that condition \eqref{eq:proxy-infeasibility-kkt-1} is satisfied after at most $N(\epsilon^{\inf},\epsilon)$ iterations and $U(\epsilon^{\inf})$ penalty updates.
	
	Combining these observations, the total number of iterations to satisfy either \eqref{eq:approxKKT} or \eqref{eq:proxy-infeasibility-kkt} is at most $N(\epsilon^{\inf},\epsilon) + P(\epsilon) U(\epsilon^{\inf})$.
\end{proof}

\section{Extensions and applications}%
\label{sec:extensions-applications}

\subsection{Standard augmented Lagrangian method}

A simple observation that is worth pointing out is that the standard augmented Lagrangian method is a special case of \Cref{alg:envelopt}.
Suppose indeed that \(h = 0\) in~\eqref{eq:constrained-convex-composite}, and that we reformulate it equivalently as
\[
	\minimize{x \in \R^n}\quad
	f(x) + \chi(c(x) \mid C).
\]
The latter has the form~\eqref{eq:constrained-convex-composite} with \(h = \chi(\cdot \mid C)\), \(F = c\) and no explicit constraints.
Each Envelopt subproblem involves the Moreau envelope of the indicator of \(C\), i.e.,
\[
	\chi_\mu = \chi(\cdot \mid C) \square (\tfrac{1}{2} \mu^{-1} \|\cdot\|_2^2) = \tfrac{1}{2} \mu^{-1} \dist(\cdot \mid C)^2.
\]
Hence, each Envelopt subproblem has the familiar form
\[
	\minimize{x \in \R^n}\quad
    f(x) + \tfrac{1}{2} \mu^{-1} \dist(c(x) + \mu \hat{y} \mid C)^2.
\]

\subsection{NCL extension}\label{sec:ncl}

As a strategy to solve constrained problems that do not satisfy a constraint qualification, \citet{ma2018stabilized} proposed the NCL formulation of the augmented Lagrangian method.
The main idea is to relax the hard constraints \(c(x) \in C\) by introducing \emph{residual} variables \(r\) such that \(c(x) - r \in C\), and to penalize the constraint \(r = 0\) via an augmented Lagrangian.
The same idea applied to~\eqref{eq:constrained-convex-composite} yields augmented Lagrangian subproblems of the form
\[
	\minimize{x \in \R^n, r \in \R^m}\quad
    f(x) + h(F(x)) + \tfrac{1}{2} \rho^{-1} \|r + \rho \hat{z}\|_2^2
	\quad \st \quad
	c(x) - r \in C,
\]
where \(\rho > 0\) is a penalty parameter and \(\hat{z} \in \R^m\) is a vector of Lagrange multiplier estimates.
Advantages of the formulation above are that the subproblems are always feasible, always satisfy LICQ, and that solvers are better able to exploit sparsity of \(\nabla c\) than if \(c(x) \in C\) were penalized explicitly in the objective via an augmented Lagrangian.
We refer the interested reader to \citep{ma2018stabilized,ma2021julia} for more details on the NCL formulation and its implementation.

The smoothing strategy of \(h(F(x))\) described in \Cref{sec:methodology} can be applied to the above subproblem to yield subproblems of the form
\[
	\minimize{x \in \R^n, r \in \R^m}\quad
    f(x) + h_\mu(F(x) + \mu \hat{y}) + \tfrac{1}{2} \rho^{-1} \|r + \rho \hat{z}\|_2^2
	\quad \st \quad
	c(x) - r \in C.
\]
One may choose \(\rho = \mu\) in each subproblem, or to manage the two penalty parameters separately.

The combined Envelopt/NCL formulation may be obtained by applying Envelopt to the formulation
\[
	\minimize{x \in \R^n}\quad
	f(x) + h(F(x)) + \chi(c(x) \mid C),
\]
without explicit constraints and the combined proper, lsc and convex nonsmooth term \((u, v) \mapsto h(u) + \chi(v \mid C)\).
In that sense, Envelopt generalizes NCL\@.

\subsection{An exact penalty method}\label{sec:exact_penalty_method}

Consider the smooth constrained problem
\begin{equation}%
	\label{eq:equality-constrained-problem}
	\minimize{x \in \R^n}\quad
	f(x)
	\quad \st \quad
	F(x) = 0 ,\quad \underline{x} \leq x \leq \widebar{x}.
\end{equation}
By way of introducing slack variables, any smooth constrained problem with equality and inequality constraints can be cast in the form~\eqref{eq:equality-constrained-problem}.
The exact penalty method \citep{pietrzykowski1969exact} consists in solving a sequence of subproblems of the form
\begin{equation}%
	\label{eq:exact-penalty-subproblem}
	\minimize{x \in \R^n}\quad
	f(x) + \tau \|F(x)\|
	\quad \st \quad
	\underline{x} \leq x \leq \widebar{x},
\end{equation}
where \(\|\cdot\|\) is a norm and \(\tau > 0\) is a penalty parameter.
The latter subproblem has the general form~\eqref{eq:constrained-convex-composite} with \(h = \|\cdot\|\), \(c = \identity\) and \(C = [\underline{x}, \, \widebar{x}]\).
The proximal operator of \(h\), and hence its Moreau envelope, is known and is related to the projection into the unit ball in the dual norm.
Most often, \(h\) is the \(\ell_1\) or Euclidean norm.
\Cref{alg:envelopt} provides a way to address the exact penalty subproblem~\eqref{eq:exact-penalty-subproblem} based on a smoothing of \(h\) via its Moreau envelope.
\Cref{alg:exact-penalty} summarizes the resulting algorithm to solve~\eqref{eq:equality-constrained-problem} via exact penalty.

\begin{algorithm}[ht]%
	\caption[caption]{%
		\label{alg:exact-penalty}
		Exact penalty method based on \Cref{alg:envelopt}.
	}
	\begin{algorithmic}[1]%
		\State Choose a stopping tolerance \(\epsilon > 0\), \(0 < \kappa_\epsilon < 1\), and \(\kappa_\tau > 1\).
		\State Choose \(\epsilon_0 \geq \epsilon\), and \(\tau_0 > 0\).
		\For{\(k = 0, 1, \ldots\)}
		\State Find an \(\epsilon_k\)-stationary point \(x_k\) of~\eqref{eq:exact-penalty-subproblem} with \(\tau = \tau_k\) using \Cref{alg:envelopt}.
		\State If \(x_k\) is \(\epsilon\)-stationary for~\eqref{eq:equality-constrained-problem}, return \(x_k\).
		\State Set \(\tau_{k+1} \coloneqq \kappa_\tau \tau_k\) and \(\epsilon_{k+1} = \kappa_\epsilon \epsilon_k\).
		\EndFor
	\end{algorithmic}
\end{algorithm}

Each subproblem~\eqref{eq:exact-penalty-subproblem} is bound constrained and can be solved with any appropriate solver for bound-constrained optimization.
Had bound constraints not been present in~\eqref{eq:equality-constrained-problem}, the subproblems would have been unconstrained.
Similarly, any constraint that was part of \(F(x) = 0\) could have been kept explicit provided a general constrained solver is available.
In this sense, the approach above is more general than that described by \citet{diouane-gollier-orban-2026}, who consider only equality-constrained problems and must appeal to a solver for nonsmooth problems to solve the subproblems.
When \(\|\cdot\|\) is a polyhedral norm,~\eqref{eq:exact-penalty-subproblem} can be reformulated as a smooth problem, though one with general inequality constraints.
The approach above yields smooth subproblems with either simple bounds or no constraints at all.

\subsection{Proximal operator of a composite sum}\label{sec:prox_composite_sum}

Let \(g: \R^n \to \R \cup \{+\infty\}\) be proper, lsc, and have a known proximal operator.
To evaluate the proximal operator of \((h\circ F) + g\), with parameter $\lambda>0$, we must solve a problem of the form
\begin{equation}\label{eq:prox_composite_sum}
	\minimize{x \in \R^n}\quad
    \tfrac{1}{2} \lambda^{-1} \|x - u\|_2^2 + h(F(x)) + g(x),
\end{equation}
whose solutions are generally not known explicitly.
However, \Cref{alg:envelopt} can be used to approximate one.
Indeed, each subproblem~\eqref{eq:partially-eliminated-subproblem} takes the form
\[
	\minimize{x \in \R^n}\quad
    \tfrac{1}{2} \lambda^{-1} \|x - u\|_2^2 + h_\mu(F(x) + \mu \hat{y}) + g(x),
\]
and can be solved with, e.g., the proximal-gradient method.

\section{Implementation and numerical experiments}%
\label{sec:implementation-numerical-experiments}

\subsection{Realizations of Envelopt}%
\label{sec:concrete_implementations_envelopt}

Two concrete implementations of \Cref{alg:envelopt} inspired directly by popular augmented Lagrangian methods in the literature satisfy \Cref{asm:envelopt-multipliers}.
The first is the algorithm of \citet{conn1991globally}, which we restate as \Cref{alg:envelopt-bcl}.

\begin{proposition}
	\Cref{alg:envelopt-bcl} satisfies \Cref{asm:envelopt-multipliers}.
\end{proposition}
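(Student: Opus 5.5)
The plan is to read off the update rules of \Cref{alg:envelopt-bcl}, which follows the bound-constrained Lagrangian scheme of \citet{conn1991globally}, and verify the two items of \Cref{asm:envelopt-multipliers} separately. I assume the rules take the classical form below; if \Cref{alg:envelopt-bcl} differs in its constants, the same argument should go through with adjusted exponents.

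\begin{itemize}
\item On a \emph{successful} iteration, where \(\|F(x_k) - u_k\| \leq \eta_k\), set \(\hat{y}_{k+1} = y_k = \hat{y}_k + (F(x_k) - u_k)/\mu_k\) and \(\mu_{k+1} = \mu_k\), and tighten \(\eta_{k+1} = \eta_k \mu_{k+1}^{\beta}\).
\item On an \emph{unsuccessful} iteration, keep \(\hat{y}_{k+1} = \hat{y}_k\), set \(\mu_{k+1} = \tau \mu_k\) with \(\tau < 1\), and reset \(\eta_{k+1} = \eta_0 \mu_{k+1}^{\alpha}\).
\end{itemize}

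Here \(\alpha, \beta > 0\) and \(\mu_0 < 1\). The key structural fact is that \(\{\mu_k\}\) is nonincreasing and changes only on unsuccessful iterations.

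\emph{Item (i).} If \(\mu_k \geq \mu > 0\) for all \(k\), there can be only finitely many unsuccessful iterations, since each multiplies \(\mu_k\) by \(\tau < 1\). So there is \(k_0\) such that every iteration \(k \geq k_0\) is successful. On those iterations \(\mu_k = \mu_{k_0} < 1\) is constant, hence \(\eta_k = \eta_{k_0} \mu_{k_0}^{\beta (k - k_0)} \to 0\). Since \(\|F(x_k) - u_k\| \leq \eta_k\) for \(k \geq k_0\), we obtain \(\{F(x_k) - u_k\} \to 0\).

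\emph{Item (ii).} If \(\{\mu_k\}_K \to 0\), monotonicity gives \(\mu_k \to 0\) along the whole sequence, so it suffices to show \(\mu_k \hat{y}_k \to 0\). I will follow \citep[Lemma~4.1]{conn1991globally} and split the iterations into epochs of constant penalty \(\bar{\mu}_\ell\), where epoch \(\ell\) starts right after the \(\ell\)-th penalty decrease. Define \(a_\ell \coloneqq \bar{\mu}_\ell \|\hat{y}\|\), evaluated at the start of epoch \(\ell\).

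\begin{itemize}
\item Within an epoch, each successful update gives \(\bar{\mu}_\ell \hat{y}_{k+1} = \bar{\mu}_\ell \hat{y}_k + (F(x_k) - u_k)\), with increment bounded by \(\eta_k\). The \(j\)-th successful iteration of the epoch has \(\eta_k = \eta_0 \bar{\mu}_\ell^{\alpha + j\beta}\). Summing this geometric series, the total growth of \(\bar{\mu}_\ell \|\hat{y}\|\) over the epoch is at most \(c\,\bar{\mu}_\ell^{\alpha}\), with \(c = \eta_0/(1 - \mu_0^{\beta})\) independent of \(\ell\).
\item At the penalty decrease that ends the epoch, the product is multiplied by \(\tau\).
\end{itemize}

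Together these yield the recursion \(a_{\ell+1} \leq \tau (a_\ell + c\,\bar{\mu}_\ell^{\alpha})\) with \(\bar{\mu}_\ell \to 0\). A standard lemma for perturbed contractions, \(a_{\ell+1} \leq \tau a_\ell + b_\ell\) with \(\tau < 1\) and \(b_\ell \to 0\), gives \(a_\ell \to 0\). Within epoch \(\ell\) we also have \(\mu_k \|\hat{y}_k\| \leq a_\ell + c\,\bar{\mu}_\ell^{\alpha}\), which controls the intermediate iterates as well. Hence \(\mu_k \hat{y}_k \to 0\).

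The main obstacle is item (ii), specifically obtaining a uniform bound on the within-epoch accumulation. This is exactly where the coupling \(\eta_k \propto \mu_k^{\alpha}\), together with the geometric tightening of \(\eta_k\), is needed. If instead \Cref{alg:envelopt-bcl} safeguards \(\hat{y}_k\) by projecting onto a bounded set, item (ii) is immediate from \(\mu_k \to 0\), and only item (i) requires the argument above.
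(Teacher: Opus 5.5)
Your proof is correct and follows the paper's route. For item (i), the paper likewise first sets $\epsilon = 0$, so that the test $\|F(x_k) - u_k\| \leq \max(\epsilon, \eta_k)$ reduces to yours. It then argues that only finitely many penalty decreases occur, after which $\eta_k$ is repeatedly multiplied by $\alpha_{k+1}^{\beta_\eta}$ with $\alpha_{k+1} = \min(\gamma, \mu_{k+1}) \leq \gamma < 1$; this factor is what removes your need for $\mu_0 < 1$. For item (ii), the paper simply cites \citep[Lemma~4.1]{conn1991globally}, and your epoch decomposition with the perturbed-contraction recursion reconstructs that lemma's argument, with $\min(\gamma, \bar{\mu}_\ell)$ replacing $\bar{\mu}_\ell$ in the constants.
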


\begin{proof}
	To establish the first condition in \Cref{asm:envelopt-multipliers}, consider $\epsilon=0$.
    If there exists $\bar{\mu}>0$ such that $\mu_k \geq \bar{\mu}$, $\mu_k$ can only be updated finitely many times, and the updates of Line~\ref{step:bcl-update-alpha-valid} are performed infinitely many times for all sufficiently large \(k\).
	Therefore, since $\alpha_{k+1} \in (0,1)$ for each $k$, we obtain $\eta_k \to 0$, and hence, $F(x_k)-u_k\to 0$.
	The second condition in \Cref{asm:envelopt-multipliers} was established in \citep[Lemma 4.1]{conn1991globally}.
\end{proof}

\begin{algorithm}[ht]%
	\caption[caption]{%
		\label{alg:envelopt-bcl}
		Envelopt algorithm modeled after \citep{conn1991globally}.
	}
	\begin{algorithmic}[1]%
		\State Choose stopping tolerance \(\epsilon > 0\).
		\State Choose \(0 < \kappa_\mu < 1\), \(0 < \gamma < 1\), \(\alpha_\epsilon > 0\), \(\alpha_\eta > 0\), \(\beta_\epsilon > 0\), and \(\beta_\eta > 0\).
		\State Choose \(\epsilon_1 \geq \epsilon\), \(\eta_1 \geq \epsilon\), \(\mu_0 > 0\) and \(\hat{y}_0 \in \R^p\).
		\For{\(k = 0, 1, \ldots\)}
		\State Find an \(\epsilon_k\)-stationary point \(x_k\) of~\eqref{eq:partially-eliminated-subproblem}.
		\State Compute \(u_k = u(x_k; \hat{y}_k, \mu_k)\) and \(y_k = y(x_k; \hat{y}_k, \mu_k)\).
		\State%
		If \(x_k\) is \(\epsilon\)-stationary for~\eqref{eq:constrained-convex-composite}, return \(x_k\).
		\If{\(\|F(x_k) - u_k\| \leq \max(\epsilon, \eta_k)\)}
		\State Set \(\mu_{k+1} \coloneqq \mu_k\) and \(\hat{y}_{k+1} \coloneqq y_k\).
		\State Set \(\alpha_{k+1} \coloneqq \min(\gamma, \mu_{k+1})\), \(\epsilon_{k+1} \coloneqq \epsilon_k \alpha_{k+1}^{\beta_\epsilon}\), and \(\eta_{k+1} \coloneqq \eta_k \alpha_{k+1}^{\beta_\eta}\).\label{step:bcl-update-alpha-valid}
		\Else
		\State Set \(\mu_{k+1} \coloneqq \kappa_\mu \mu_k\) and \(\hat{y}_{k+1} \coloneqq \hat{y}_k\).
		\State Set \(\alpha_{k+1} \coloneqq \min(\gamma, \mu_{k+1})\), \(\epsilon_{k+1} \coloneqq \epsilon_1 \alpha_{k+1}^{\alpha_\epsilon}\), and \(\eta_{k+1} \coloneqq \eta_1 \alpha_{k+1}^{\alpha_\eta}\).
		\EndIf
		\EndFor
	\end{algorithmic}
\end{algorithm}

Another widespread scheme that complies with \cref{asm:envelopt-multipliers} is that of \citet{birgin2014practical}, see also \citep{andreani2008augmented,demarchi2023constrained,demarchi2025augmented}.
Following this approach the multiplier estimates $\hat{y}_{k+1}$ are always updated, but \emph{safeguarded} by projecting $y_k$ onto a suitable compact set, so that the product $\mu_k \hat{y}_k$ vanishes as $\mu_k$ approaches zero.

\subsection{Implementation details}%
\label{sec:implementation-details}

In our implementation, several subproblem solvers are available depending on \(m\), the number and types of hard constraints.
If \(m = 0\), subproblems are unconstrained and are solved by way of a traditional trust-region method \citep[Chapter~\(6\)]{conn-gould-toint-2000} in which steps are computed by the truncated conjugate gradient (CG) method \citep{steihaug-1983}.
Our implementation uses the \texttt{trunk} solver \citep{migot_jsosolvers_jl_unconstrained_and_2026}.

If \(m > 0\) but the hard constraints are simple bounds, subproblems are bound constrained.
We solve them with our own implementation of the trust-region method for bound-constrained problems of \citet{tron}, named \texttt{tron}, where steps are computed using a combination of projected-gradient steps and the truncated CG method.
Whereas the original code of \citep{tron} requires explicit exact second-derivatives and uses a limited-memory factorization as preconditioner for CG, our implementation, also found in \citep{migot_jsosolvers_jl_unconstrained_and_2026}, does not require exact second derivatives, but does not use a preconditioner.
In particular, the latter can be used with quasi-Newton approximations.

When \(m > 0\) and \(c\) represents constraints that are more general than simple bounds, we appeal to a general constrained solver and currently support three of them: the filter interior-point method implemented in Ipopt \citep{waechter-biegler-2006,orban2026nlpmodelsipopt} and MadNLP \citep{shin2024accelerating}, and the various solvers available as part of the KNITRO library \citep{byrd-nocedal-waltz-2006,migot_nlpmodelsknitro_jl_a_thin_2026}.
Currently, we use the \emph{direct} interior-point method implemented in KNITRO \citep{waltz-morales-nocedal-orban-2006}.
The main reason for relying on interior-point methods is that our implementation warm starts them as the outer iterations progress in a manner similar to what is done in the NCL solver \citep{ma2018stabilized,ma2021julia,montoison2025madncl}.
Naturally, it is also possible to use Ipopt, MadNLP or KNITRO when the subproblems are bound constrained or unconstrained.

An important feature of~\eqref{eq:partially-eliminated-subproblem} is that \(h_\mu\) is not twice differentiable in general.
Thus, we rely on quasi-Newton approximations.
Ipopt, MadNLP and KNITRO all support quasi-Newton approximations, but their management is hard-coded in those libraries in such a way that is crucial for the efficiency of the step computation---see, e.g., \citep{waltz-morales-nocedal-orban-2006}.
However, \texttt{trunk} and \texttt{tron} let users supply their own approximations.
With those two solvers, we implemented structured quasi-Newton approximations in the sense that we supply the Hessian approximation
\[
	\nabla^2 f(x_{k,j}) + B_{k,j},
\]
where \(B_{k,j} = B_{k,j}^\top\) is a limited-memory BFGS or SR1 approximation constructed from pairs
\[
	s_{k,j} \coloneqq x_{k,j+1} - x_{k,j},
	\qquad
	y_{k,j} \coloneqq \nabla h_\mu(F(x_{k,j}) + \mu_k \hat{y}_k) - \nabla h_\mu(F(x_{k,j-1}) + \mu_k \hat{y}_k),
\]
where \(k\) denotes an iteration of \Cref{alg:envelopt} and \(j\) one of the subproblem solver.

\subsection{Numerical experiments}%
\label{sec:num_experiments}

The broad applicability of Envelopt is demonstrated in this section with illustrative and real-world problems.
We also compare the performance of our Envelopt\footnote{\texttt{Envelopt v0.1.0}, \https{github.com/JuliaSmoothOptimizers/Envelopt.jl}.} implementation with relevant solvers, such as
MadNLP\footnote{\texttt{MadNLP v0.9.2}, \https{github.com/MadNLP/MadNLP.jl}.} \citep{shin2024accelerating},
Ipopt \citep{waechter-biegler-2006,orban2026nlpmodelsipopt},
SCS \citep{odonoghue2016conic},
Clarabel \citep{goulart2026clarabel},
NCL \citep{ma2018stabilized,ma2021julia},
and
ALPS \citep{demarchi2023constrained}.

\subsubsection{Degenerate SDP}

\newcommand{\PSDcone}[1]{\mathcal{S}_+^{#1}}

We consider an illustrative linear conic problem by \citet[Example~4]{ramana1997exact}, which reads
\begin{equation}%
	\label{eq:problem_sdp}
	\minimize{x\in\R^2}{}\quad -x_2 \qquad
	\st \quad
	F(x)\coloneqq\begin{bmatrix}
		1-x_2 & 0    & 0    \\
		0     & -x_1 & -x_2 \\
		0     & -x_2 & 0
	\end{bmatrix} \in \PSDcone{3}
\end{equation}
where $\PSDcone{3}$ denotes the cone of $3\times 3$ positive semidefinite matrices.
Due to lack of constraint qualifications,~\eqref{eq:problem_sdp} and its dual problem exhibit a finite and positive duality gap, which makes the problem degenerate and hard to solve.
Indeed, SCS \citep{odonoghue2016conic}, an operator-splitting conic solver, and Clarabel \citep{goulart2026clarabel}, an interior-point solver tailored to conic problems, struggle with~\eqref{eq:problem_sdp}.

For all tolerances $\epsilon\in\{10^{-3},10^{-4},10^{-5},10^{-6}\}$,
SCS\footnote{\texttt{SCS v2.6.3}, \https{github.com/jump-dev/SCS.jl}.} hits the maximum number of iterations ($10^5$ by default) and returns with status \texttt{solved (inaccurate - reached max\_iters)}, while Clarabel\footnote{\texttt{Clarabel v0.11.1}, \https{github.com/oxfordcontrol/Clarabel.jl}.} stops after 38 iterations because of \texttt{insufficient progress}.

Problem~\eqref{eq:problem_sdp} fits~\eqref{eq:constrained-convex-composite} with the linear cost in $f$, $h$ as the indicator of $\PSDcone{3}$, and $F$ the affine matrix-valued operator appearing in~\eqref{eq:problem_sdp}.
Then, subproblems~\eqref{eq:partially-eliminated-subproblem} take the unconstrained form
\[
    \minimize{x\in\R^2}{}\quad -x_2 + \tfrac{1}{2} \mu^{-1} \dist(F(x)+\mu\hat{y} \mid \PSDcone{3})^2 .
\]
Envelopt is able to find $\epsilon$-stationary points for~\eqref{eq:problem_sdp}, even with $\epsilon=10^{-6}$, as reported in \cref{tab:results_sdp}.
MadNLP performs better than Ipopt and KNITRO in this case, tackling the subproblems down to tolerance $\epsilon=10^{-6}$ and with fewer total inner iterations.

\begin{table}[tbh]
	\centering%
	\caption{Solving~\eqref{eq:problem_sdp} with Envelopt. Comparison for different subsolvers and tolerances $\epsilon$ in terms of number of iterations, and total number of subsolver iterations. A dash indicates solver failure.}%
	\label{tab:results_sdp}%
	\begin{tabular}{c|ccc|ccc|ccc}
		\hline
		subsolver            & \multicolumn{3}{c|}{MadNLP} & \multicolumn{3}{c|}{Ipopt} & \multicolumn{3}{c}{KNITRO}                                                                         \\
		tolerance $\epsilon$ & $10^{-4}$                   & $10^{-5}$                  & $10^{-6}$                  & $10^{-4}$ & $10^{-5}$ & $10^{-6}$ & $10^{-4}$ & $10^{-5}$ & $10^{-6}$ \\
		\hline
		iterations  			& 9	& 7 & 7 							& 6	& 19	& - 						& 6 & 4 & - \\
		subsolver iter.			& 61 & 72 & 69 							& 61 & 125	&  -						& 111 & 75 & - \\
		\hline
	\end{tabular}
\end{table}

\subsubsection{Simple MPCC}

We showcase the benefits of integrating Envelopt and NCL, as discussed in \cref{sec:ncl}, by considering the following two-dimensional problem with a complementarity constraint, inspired by \citet{scholtes2001convergence}:
\begin{align}\label{eq:mpec_reg}
	\minimize{x\in\R^2}\quad{} & \|x-1\|^2 + \|x\|_1 &
	\st\quad{}                 &
	x \geq 0
	,\quad
	x_1 x_2 \leq 0	.
\end{align}
This example also allows us to highlight the flexibility offered by Envelopt in the problem formulation, particularly the encoding of constraints as soft or hard.

The objective of~\eqref{eq:mpec_reg} is convex, and the feasible set is connected but nonconvex.
Moreover, standard constraint qualifications fail at the origin, which is feasible.
There are two global minimizers, $(0.5,0)$ and $(0,0.5)$, with objective value $1.75$.
The origin is a local maximizer, with objective value $2$.

Problem~\eqref{eq:mpec_reg} fits the template~\eqref{eq:constrained-convex-composite} with $f(x)\coloneqq \|x-1\|^2$, $F\coloneqq \identity$, $h\coloneqq \|\cdot\|_1$, nonnegativity bounds, and a nonlinear inequality constraint.
We run Envelopt and its NCL extension for~\eqref{eq:mpec_reg}, along with Ipopt, MadNLP and NCL \citep{orban_ncl_jl_a_julia, ma2021julia}
for the equivalent reformulation of~\eqref{eq:mpec_reg} as the smooth nonlinear program
\begin{align}\label{eq:mpec_nlp}
	\minimize{x\in\R^2}\quad{} & x_1^2+x_2^2-x_1-x_2+2 &
	\st\quad{}                 &
	x \geq 0
	,\quad
	x_1 x_2 \leq 0										.
\end{align}
Starting from 100 random initial points, each solver terminated with a successful status and approximately at $(0,0.5)$, $(0.5,0)$, or $(0,0)$ in at least 96 cases.
A summary of the numerical performance is reported in \cref{tab:results_mpec}.
Only the NCL variants and Envelopt with Ipopt consistently find the global minimum; the other solvers returned the local maximizer $(0,0)$ in 24--27\% of cases.

\begin{table}[tbh]
	\centering%
	\caption{Solving a simple MPCC from 100 random initial points with Envelopt, NCL-Envelopt, and NLP solvers. Comparison for different problem formulations and subsolvers in terms of success rate in finding the global minimum and total number of subsolver iterations. Tolerance $\epsilon=10^{-5}$.}%
	\label{tab:results_mpec}%
	\begin{tabular}{c|c|cc|cc}
		\hline
		problem 								& solver (subsolver)	& global 	& local 	& \multicolumn{2}{c}{subsolver iterations} \\
		    									&           			& min rate 	& min rate	&  (median) & (max) \\
		\hline
		\multirow{4}{*}{\eqref{eq:mpec_reg}} 	& Envelopt (MadNLP) 	& 71\%		& 27\%		& 62.5		& 154 \\
												& Envelopt (Ipopt) 		& 97\%		& 0\%		& 25 		& 106 \\
												& NCL-Envelopt (MadNLP) & 100\%		& 0\%		& 154		& 679 \\
												& NCL-Envelopt (Ipopt) 	& 100\%		& 0\% 		& 54 		& 104 \\
		\hline
		\multirow{3}{*}{\eqref{eq:mpec_nlp}} 	& MadNLP 				& 69\% 		& 27\%	 	& 24 		& 31 \\
												& Ipopt 				& 76\%		& 24\%	 	& 26 		& 48 \\
												& NCL 					& 100\%		& 0\%		& 13 		& 81 \\
		\hline
		\multirow{2}{*}{\eqref{eq:mpec_bnd}} 	& Envelopt (MadNLP) 	& 73\% 		& 27\% 		& 142.5 	& 194 \\
												& Envelopt (Ipopt) 		& 73\% 		& 27\%	 	& 57.5 		& 98 \\
												& Envelopt (\texttt{tron}) 		& 54\%		& 46\% 		& 97.5 		& 334 \\
		\hline
		\multirow{2}{*}{\eqref{eq:mpec_unc}} 	& Envelopt (MadNLP) 	& 100\%		& 0\%		& 79		& 121 \\
												& Envelopt (Ipopt) 		& 100\% 	& 0\% 		& 67.5  	& 91  \\
												& Envelopt (\texttt{tron}) 		& 100\% 	& 0\% 		& 3553.5	& 4112  \\
												& Envelopt (\texttt{trunk}) 		& 74\% 		& 7\% 		& 537 		& 5503 \\
		\hline
	\end{tabular}
\end{table}

We use~\eqref{eq:mpec_reg} to inspect the effect of formulating constraints as hard or soft.
As discussed in \Cref{sec:introduction}, Envelopt offers the option of dealing with constraints directly in the subsolver,
in contrast with other schemes that relax all constraints.
Let us rewrite~\eqref{eq:mpec_reg} as the equivalent problem
\begin{align}\label{eq:mpec_bnd}
	\minimize{x\in\R^2}\quad{} & \|x-1\|^2 + \|x\|_1 + \chi(x_1x_2 \mid \R_-) &
	\st\quad{}                 &
	x \geq 0																	.
\end{align}
With this formulation, the nonlinear inequality constraint $x_1 x_2 \leq 0$ is not passed to the subsolver but instead relaxed and treated with an augmented Lagrangian penalty in the Envelopt iterations; the corresponding subproblems~\eqref{eq:partially-eliminated-subproblem} have only bound constraints.
Another equivalent formulation of~\eqref{eq:mpec_reg} is
\begin{equation}\label{eq:mpec_unc}
	\minimize{x\in\R^2}\quad{} \|x-1\|^2 + \|x\|_1 + \chi(x \mid \R_+^2) + \chi(x_1x_2 \mid \R_-) ,
\end{equation}
which leads to unconstrained subproblems.
Note that there is no NCL counterpart for~\eqref{eq:mpec_bnd} and~\eqref{eq:mpec_unc}, since they lack nonlinear explicit constraints.

The performance of Envelopt on formulations~\eqref{eq:mpec_bnd} and~\eqref{eq:mpec_unc} is summarized in \cref{tab:results_mpec}, for comparison with~\eqref{eq:mpec_reg}.
For each Envelopt subsolver, the formulation~\eqref{eq:mpec_reg} with hard constraints requires significantly fewer iterations than~\eqref{eq:mpec_bnd} and~\eqref{eq:mpec_unc}.
A tentative explanation is that simple constraints are better handled directly by the subsolver than by the Moreau envelope relaxation and smoothing in the outer iterations.

\subsubsection{Low-rank matrix completion}

Consider $N$ points $p_1, \ldots, p_N \in \R^\ell$ and $P \coloneqq \begin{bmatrix} p_1 & p_2 & \cdots & p_N \end{bmatrix} \in \R^{\ell \times N}$.
Let $\Delta\in\R^{N\times N}$ denote the associated Euclidean distance matrix, i.e., $\Delta_{i,j} \coloneqq \|p_i-p_j\|^2$ for all $i$, $j = 1, \ldots, N$.
We wish to recover information on $P$ based on partial knowledge of $\Delta$.
In particular, we assume that $\Omega \subset \{1, \ldots, N\}^2$ is a set of pairs such that only the entries $\Delta_{i,j}$, $(i, j) \in\Omega$, of $\Delta$ are known.

We lift the problem by introducing $B \coloneqq P^\top P$ whose rank is, by construction, less than or equal to $\ell$.
We seek a symmetric matrix $B \in \R^{N\times N}$ that satisfies the distance constraints associated with the observations.
Among these admissible matrices, those with minimum rank are preferred.
We formulate the matrix completion task as \citet[\S\(4.5\)]{demarchi2023constrained}, for comparison with ALPS\@:
\begin{align}\label{eq:problem_matcompletion}
	\minimize{B\in\R^{N\times N}}\quad{} & \|B\|_*                                        &  &                                        \\
	\st\quad{}                           & B_{i,i}-B_{i,j}-B_{j,i}+B_{j,j} = \Delta_{i,j} &  & \forall (i,j)\in\Omega 		\nonumber     \\
	                                     & B_{i,j} = B_{j,i}                              &  & \forall i,j\in\{1,\ldots,N\}	\nonumber
\end{align}
where the nuclear norm $\|\cdot\|_* \coloneqq \sum_i \sigma_i(\cdot)$ encourages $B$ to have sparse singular values $\sigma_i(B)$, hence low rank \citep{recht2010guaranteed}.

The problem has the form~\eqref{eq:constrained-convex-composite} with \(x = \mathrm{vec}(B)\), \(f = 0\), \(h(x) = \|\mathrm{mat}(x)\|_*\), \(F(x) = x\), and \(c(x)\) represents the explicit constraints above, where we denoted \(\mathrm{vec}\) the operation that stacks the columns of a matrix into a vector, and \(\mathrm{mat}\) its inverse.
The problem has \(n=N^2\) variables and \(m=|\Omega| + N(N-1)/2\) constraints.

We generated random instances as in \citep[\S 4.5]{demarchi2023constrained}, with $N=10$, $\ell=5$, and $|\Omega|=\lfloor N(N + 1)/6 \rfloor$ observations, leading to problems~\eqref{eq:constrained-convex-composite} with 100 variables and 63 (linear equality) constraints.
\cref{tab:results_matrix_completion} summarizes the results obtained by invoking Envelopt on 100 random instances, each with a random initialization, for different tolerance levels.

\begin{table}[tbh]
	\centering%
	\caption{Solving 100 random instances of~\eqref{eq:problem_matcompletion} with Envelopt. Comparison for different subsolvers and tolerances $\epsilon$ in terms of success rate, number of iterations, and total number of subsolver iterations.}%
	\label{tab:results_matrix_completion}%
	\begin{tabular}{cc|cccc|cccc}
		\hline
		\multicolumn{2}{c|}{Envelopt subsolver}   & \multicolumn{4}{c|}{MadNLP} & \multicolumn{4}{c}{Ipopt}                                                                                \\
		\multicolumn{2}{c|}{tolerance $\epsilon$} & $10^{-3}$                   & $10^{-4}$                 & $10^{-5}$ & $10^{-6}$ & $10^{-3}$ & $10^{-4}$ & $10^{-5}$ & $10^{-6}$        \\
		\hline
		\multicolumn{2}{c|}{success rate}         & 97\%                        & 97\%                      & 93\%      & 90\%      & 89\%      & 87\%      & 76\%      & 60\%             \\
		\multirow{2}{*}{iterations}               & (median)                    & 6                         & 7         & 7         & 8         & 5         & 7         & 8         & 9    \\
		                                          & (max)                       & 9                         & 12        & 14        & 16        & 8         & 8         & 9         & 10   \\
		subsolver                                 & (median)                    & 14                        & 14        & 14        & 13        & 113       & 167       & 217.5     & 266  \\
		iterations                                & (max)                       & 444                       & 801       & 1068      & 854       & 592       & 1451      & 1019      & 1538 \\
		\hline
	\end{tabular}
\end{table}

Across all tolerances $\epsilon$, the failures of Envelopt arise from the subsolver returning prematurely with an error message, always for subproblems~\eqref{eq:partially-eliminated-subproblem} with $\mu > 10^{-3}$.
Rather than structural limitations of the Envelopt approach, this indicates the need for a better integration and tuning of subsolvers in our implementation.
Conversely, Envelopt is significantly more iteration-efficient than ALPS, an augmented Lagrangian scheme without the partial minimization step, which required several thousands of projected-gradient steps, see \citep[Fig. 5]{demarchi2023constrained}.

\subsubsection{Exact penalty method}

An interesting application of Envelopt is the solution of \emph{exact penalty subproblems}, as mentioned in \cref{sec:exact_penalty_method}.
Envelopt can address the subproblem~\eqref{eq:exact-penalty-subproblem} for different choices of penalty function $h$.
We illustrate this by invoking \cref{alg:exact-penalty} on two problems from the CUTEst benchmark set \citep{gould2015cutest}.
The small problem \texttt{zangwil3} has $n=3$ variables and $m=3$ equality constraints.
The medium-sized problem \texttt{hs118} has $n=32$ and $m=17$ in the form~\eqref{eq:equality-constrained-problem}.

The results reported in \cref{tab:results_exact_penalty_hs18} indicate that Envelopt can effectively address the subproblems arising from the exact penalty method.
Comparing subsolvers and penalty norms, it appears that Ipopt and $h\coloneqq \|\cdot\|_1$ perform slightly better than other configurations.

\begin{table}[tbh]
	\centering%
	\caption{Solving CUTEst problems with \cref{alg:exact-penalty} and Envelopt for~\eqref{eq:exact-penalty-subproblem}. Comparison for different choices of penalty norms and subsolvers, in terms of iterations for \cref{alg:exact-penalty} on~\eqref{eq:equality-constrained-problem}, for Envelopt on~\eqref{eq:exact-penalty-subproblem}, and the total number of subsolver iterations for each call to Envelopt.}%
	\label{tab:results_exact_penalty_hs18}%
	\begin{tabular}{ccc|ccc|ccc}
		                        &                                     &                                       & \multicolumn{3}{c|}{problem \texttt{hs118}} & \multicolumn{3}{c}{problem \texttt{zangwil3}}                                                             \\
		\hline
		Envelopt                & \multicolumn{2}{c|}{number of}      & \multicolumn{3}{c|}{penalty norm $h$} & \multicolumn{3}{c}{penalty norm $h$}                                                                                                                    \\
		subsolver               & \multicolumn{2}{c|}{iterations}     & $\|\cdot\|_1$                         & $\|\cdot\|_2$                               & $\|\cdot\|_\infty$                            & $\|\cdot\|_1$ & $\|\cdot\|_2$ & $\|\cdot\|_\infty$        \\
		\hline
		\multirow{5}{*}{MadNLP} & \multicolumn{2}{c|}{penalty method} & 6                                     & 12                                          & 7                                             & 4             & 3             & 5                         \\
		                        & \multirow{2}{*}{Envelopt}           & (median)                              & 6                                           & 9.5                                           & 6             & 2             & 3                  & 3    \\
		                        &                                     & (max)                                 & 8                                           & 13                                            & 10            & 3             & 4                  & 4    \\
		                        & \multirow{2}{*}{subsolver}          & (median)                              & 428.5                                       & 1435                                          & 439           & 4             & 6                  & 7    \\
		                        &                                     & (max)                                 & 696                                         & 4134                                          & 2201          & 19            & 12                 & 17   \\
		\hline
		\multirow{5}{*}{Ipopt}  & \multicolumn{2}{c|}{penalty method} & 7                                     & 7                                           & 6                                             & 4             & 6             & 2                         \\
		                        & \multirow{2}{*}{Envelopt}           & (median)                              & 6                                           & 6                                             & 6             & 3             & 3                  & 3.5  \\
		                        &                                     & (max)                                 & 8                                           & 14                                            & 9             & 4             & 3                  & 4    \\
		                        & \multirow{2}{*}{subsolver}          & (median)                              & 356                                         & 359                                           & 338           & 8.5           & 7                  & 14.5 \\
		                        &                                     & (max)                                 & 503                                         & 1550                                          & 719           & 16            & 13                 & 19   \\
		\hline
	\end{tabular}
\end{table}

\subsubsection{Structured composite sum}
We test Envelopt on the use case mentioned in \Cref{sec:prox_composite_sum}.
Given a convex regularizer $h$, a matrix $A\in\R^{p\times n}$, and a prox-friendly regularizer $g$, the proximal operator of $\varphi \coloneqq g+h\circ A$ corresponds to solving the problem \eqref{eq:prox_composite_sum}, with $F(x) \coloneqq Ax$, for some prescribed $\xi\in\R^n$ and $\lambda>0$.
Since the quadratic $f(x)\coloneqq \frac{1}{2\lambda}\|x-\xi\|^2$ is convex smooth and $h$ is convex prox-friendly, problem \eqref{eq:prox_composite_sum} can be tackled with the three-term splitting schemes V\~u--Condat \cite{condat2013primal,vu2013splitting} and AFBA \cite{latafat2017asymmetric} when also $g$ is convex prox-friendly.
In contrast, Envelopt can address \eqref{eq:prox_composite_sum} also with nonconvex $g$, nonconvex $f$, and $h$ composed with a nonlinear operator.

We choose $h \in \left\{ \|\cdot\|_1, \|\cdot\|_2 \right\}$, $g \in \left\{ \|\cdot\|_1, \|\cdot\|_{1/2}^{1/2}, \|\cdot\|_0 \right\}$, set $n=p=10$, sample $A$, $\xi$ from a normal distribution, and take values $\lambda \in \{10^{-3},\ldots,10^3\}$ equidistant on a logarithmic scale.
For the Envelopt subproblems we use the proximal-gradient methods R2DH \cite[\S 7]{diouane2026proximal} and NMPG \cite{demarchi2023monotony} through their implementation \texttt{R2DH} and \texttt{NMPG} in \texttt{RegularizedOptimization.jl} \cite{gollier2026regularized}.
While both methods use a backtracking strategy for globalization,
R2DH requires at least a fraction of Cauchy decrease and NMPG adopts an Armijo-type condition.
They also employ different merit values for nonmonotone globalization, with max and average formulas, respectively. 
Envelopt with these subsolvers is compared against the primal-dual methods implemented as \texttt{VuCondat} and \texttt{AFBA} in \texttt{ProximalAlgorithms.jl} \cite{stella2025proximalalgorithmsjl}.
All solvers are given a tolerance $\epsilon = 10^{-5}$, maximum $10^5$ iterations, and initialized with a random $x_0\in\R^n$ (and zero dual).
Although the solvers have different stopping conditions, our objective here is to show that Envelopt can solve problems that the other two solvers cannot.

Envelopt always found an $\epsilon$-stationary point, with both subsolvers.
V\~u--Condat and AFBA solved all instances with $g\coloneqq \|\cdot\|_1$, and otherwise failed on about 40\% of instances, regardless of $h$.
In the convex setting, where V\~u--Condat and AFBA have convergence guarantees, Envelopt requires fewer iterations for small values $\lambda$, and slightly more for large values.
The statistics reported in \cref{tab:results_f_composite_sum} indicate that Envelopt behaves consistently across different regularizers $g$ and $h$, regardless of the subsolver.

\begin{table}[tbh]
	\centering%
	\caption{Computing the proximal operator of a composite sum \eqref{eq:prox_composite_sum}. Comparison for different choices of functions $h$ and $g$, in terms of total number of iterations. While Envelopt solved all instances, V\~{u}--Condat and AFBA failed on about 40\% of instances with $g$ nonconvex; their statistics are omitted in this case.}%
	\label{tab:results_f_composite_sum}%
	\begin{tabular}{c|c|cc|cc|cc}
		& & \multicolumn{2}{c|}{$g=\|\cdot\|_1$} & \multicolumn{2}{c|}{$g=\|\cdot\|_{1/2}^{1/2}$} & \multicolumn{2}{c}{$g=\|\cdot\|_0$} \\
		\hline
		& iterations & median & max & median & max & median & max \\
		\hline
		\multirow{4}{*}{\rotatebox{90}{$h=\|\cdot\|_1$}} 		& V\~{u}--Condat 	& 12 & 34403		& - & - 		& - & - \\
																& AFBA 				& 59 & 1219 		& - & -			& - & - \\
																& Envelopt (R2DH) 	& 18 & 1363			& 13 & 953		& 13 & 789 \\
																& Envelopt (NMPG) 	& 10 & 362 			& 7 & 864		& 5.5 & 625 \\
		\hline
		\multirow{4}{*}{\rotatebox{90}{$h=\|\cdot\|_2$}} 		& V\~{u}--Condat 	& 12 & 4538 		& - & - 		& - & - \\
																& AFBA 				& 59 & 2817 		& - & - 		& - & - \\
																& Envelopt (R2DH) 	& 16 & 140			& 9 & 127 		& 7.5 & 138 \\
																& Envelopt (NMPG) 	& 7 & 134 			& 4 & 97 		& 5 & 93 \\
		\hline
	\end{tabular}
\end{table}

\section{Discussion and perspectives}%
\label{sec:discussion-perspectives}

Our implementation of \Cref{alg:envelopt} could clearly be improved in a number of ways.
One such way is to use generalized Hessians of the Moreau envelope as discussed by \citet{dhingra-khong-jovanovic-2022}.
Whether generalized Hessians yield better performance in practice than quasi-Newton approximations remains to be seen.
A second way is for cases where the proximal operator of \(h\) is not known in analytic form, but can be approximated by an iterative procedure that can be interrupted early.
Inexact evaluations of the proximal operator imply inexact evaluations of the Moreau envelope and of its gradient, but recent research indicates that such inexact evaluations can be incorporated into proximal algorithms while preserving their convergence and worst-case complexity properties \citep{demarchi2022proximal,allaire-digabel-orban-2025}.

An obvious open question is whether any of the above generalizes to nonconvex \(h\).
In view of \citep[Proposition~\(13.37\)]{rockafellar1998variational}, if \(h\) is \emph{prox regular}, its proximal operator is locally single-valued and Lipschitz continuous for sufficiently small values of \(\mu\).
Moreover, its Moreau envelope remains differentiable with gradient~\eqref{eq:Moreau_envelope_deriv}.
Thus, borrowing ideas from \citep{demarchi2024implicit}, a generalization may be possible though it appears difficult in general to determine values of \(\mu\) for which those properties occur.

\bibliographystyle{abbrvnat}
\bibliography{abbrv,envelopt}

\clearpage
\tableofcontents

\end{document}